\theoremstyle{plain}
 \newtheorem{thm}{Theorem}[section]
 \newtheorem{prop}{Proposition}[section]
 \newtheorem{cor}{Corollary}[section]
\theoremstyle{definition}
\theoremstyle{remark}
 \numberwithin{equation}{section}
\def\wc{\overset{d}{=}}
\def\wcL{\overset{{\mathcal{L}}}{=}}
\def\zint{-\!\!\!\!\!\!\int}
\title[Kelly Criterion]{Kelly Criterion: From a Simple Random Walk to L\'{e}vy
Processes}
\subjclass[2010]{Primary 60G50; Secondary 60F05, 60F17, 60G51, 91A60}
\keywords{Logarithmic Utility, Processes With Independent Increments, Weak Convergence}
\author[Lototsky]{Sergey Lototsky} %% Please write ful names, avoid initials
\author[Pollok]{Austin Pollok}
\address{ %% Put here your affiliation; street address is not required
Sergey Lototsky \\
Department of Mathematics \\ % \hfill (Received 00 00 201?)\\
USC   \\ %\hfill (Revised  00 00 201?)\\
Los Angeles, CA 90089\\
USA}
\email{lototsky@usc.edu}
\address{ %% Put here your affiliation; street address is not required
Austin Pollok \\
Department of Mathematics \\ % \hfill (Received 00 00 201?)\\
USC   \\ %\hfill (Revised  00 00 201?)\\
Los Angeles, CA 90089\\
USA}
\email{pollok@usc.edu}
\begin{document}

{\begin{flushleft}\baselineskip9pt\scriptsize
%PUBLICATIONS DE L'INSTITUT MATH\'EMATIQUE\newline
%Nouvelle s\'erie, tome ??(1??)) (201?), od--do \hfill DOI: \\
\today\\
%In progress
\end{flushleft}}
%\vspace{18mm} \setcounter{page}{1} \thispagestyle{empty}

\begin{abstract}
The original Kelly criterion provides a strategy to maximize the
long-term growth of winnings in a sequence of simple Bernoulli bets with
an edge, that is, when the expected return on each bet is positive.
The objective of this work is to
consider  more general models of returns and the continuous time,
or high frequency, limits of those models.

\end{abstract}

\maketitle
\section{Introduction}
Consider repeatedly engaging in a game of chance where one side has an edge
and seeks to optimize the betting in a way that ensures maximal long-term
growth rate of the overall wealth.  This problem was posed and analyzed
 by John Kelly \cite{Kelly56} at Bell Labs in 1956; the solution was
  implemented and tested in a variety of setting by a
   successful mathematician, gambler, and  hedge fund manager  Ed Thorp  \cite{Thorp06}
 over the period from  the 60's to the early 00's.

As a motivating example, consider betting on a biased coin
toss where the return $r$ is a random variable with distribution
\begin{equation}
\label{return1}
\mathbb{P}(r=1) = p,\ \ \ \mathbb{P}(r=-1)=1- p;
\end{equation}
in what follows, we refer to this as the {\em simple Bernoulli model}.
The condition to have an edge in this setting becomes  $1/2<p\leq 1$ or, equivalently,
\begin{equation}
\label{edge1}
    \mathbb{E}[r] =2p-1>0.
\end{equation}
We plan on being able to make a large sequence of bets on this biased coin, resulting in an iid sequence of returns
$\{r_k\}_{k\geq 1}$  with the same distribution as $r$, and ask how much we should bet so as to maximize
long term wealth,
 given that we are compounding our returns. Assume we are betting
 with a fixed exposure $f$, that is, each bet involves a fixed fraction $f$ of the overall
wealth, and   $f \in [0,1]$. Practically, $f \geq 0$ means {\bf no shorting} and $f \leq 1$ means {\bf no leverage}, which we refer to as the {\bf NS-NL} condition.
Then, starting with the initial amount $W_0$,
  the total wealth at time $n=1,2,3,\ldots$ is the following function of $f$:
\begin{equation*}
%\label{wealth1}
    W_n^f = W_0 \prod_{k=1}^n\big(1 + f r_k\big).
\end{equation*}
For the long-term compounder wishing to maximize their long term wealth, a natural and equivalent goal would be  to find the strategy $f=f^*$  maximizing the long-term growth rate
\begin{equation}
\label{rate-dt}
 g_r(f):= \lim_{n\rightarrow \infty}\frac{1}{n} \ln \frac{W_n^f}{W_0}.
 \end{equation}
 By direct computation,
 $$
 g_r(f) =
    \lim_{n\rightarrow \infty}\frac{1}{n}\sum_{k=1}^n \ln(1+fr_k)
     =  \mathbb{E}\ln(1+fr)=p\ln(1+f) + (1-p)\ln(1-f),
$$
where the second equality follows by  the law of large numbers, and therefore, after solving $g_r'(f^*)=0$,
\begin{equation}
\label{optimal1}
 f^* = 2p-1,\ \
 \max_{f \in [0,1]} g_r(f)= g_r(f^*)=p\ln \frac{p}{1-p}+(2-p)\ln(2-2p);
\end{equation}
note that the edge condition \eqref{edge1} ensures that $f^*$ is an admissible strategy and $g_r(f^*)>0$.
For more discussions of this result see \cite{Thorp06}.

Our objective in  this paper is to derive analogues of  \eqref{optimal1}
in the following situations:
\begin{enumerate}
\item the distribution of returns is a more general random variable;
\item the compounding is continuous in time;
\item the compounding is  high frequency, leading to a continuous-time limit.
\end{enumerate}
In particular, we consider several scenarios when
the returns are described by  L\'evy processes, which  addresses
some of Thorp's questions  regarding fat-tailed distributions
in finance \cite{Thorp08}.

In what follows, we write $\xi\wc\eta$ to indicate equality in distribution for
two random variables, and $X\wcL Y$ to indicate equality in law (as function-valued random elements) for two random processes. For $x>0$, $\lfloor x \rfloor$
 denotes the largest integer less than or equal to $x$.
 To simplify the notations, we always assume that $W_0=1$.

\section{Discrete Compounding: General Distribution of Returns}

Assume that the returns on  each bet are independent  random variables
 $r_k,\ k\geq 1,$ with the same distribution as a given random variable $r$,
 and let
 \begin{equation}
 \label{wealth2}
  W_n^f =  \prod_{k=1}^n\big(1 + f r_k\big)
 \end{equation}
  denote the corresponding wealth process. We also keep  the NS-NL
  condition on admissible strategies: $f\in [0,1]$.

  For the wealth process $W^f$ to be well-defined, we need the random
  variable $r$ to have the following properties:
\begin{align}
\label{r1}
&\mathbb{P}(r\geq -1)=1;\\
\label{r2}
&\mathbb{P}(r>0)>0,\ \mathbb{P}(r<0)>0;\\
\label{r3}
&\mathbb{E}|\ln(1+r)|<\infty.
\end{align}

Condition \eqref{r1} quantifies the idea that a loss in a bet should not be
more than $100\%$. Condition \eqref{r2} is basic non-degeneracy: both gains and losses are possible.
Condition \eqref{r3} is a minimal requirement to define the
 long-term growth rate of the wealth process.

The key object in this section will be the function
\begin{equation}
\label{F(f)}
g_r(f)=\mathbb{E}\ln(1+fr).
\end{equation}
In particular, the following result shows that $g_r(f)$ is the long term
growth rate of the wealth process $W^f$.

\begin{prop}
\label{prop:LLN}
If \eqref{r1} and \eqref{r3} hold and $g_r(f)\not=0$,
then, for every $f\in[0,1]$,  the
wealth process $W^f$ has an asymptotic representation
\begin{equation}
\label{eq:LLN-g1}
W^f_n=\exp\Big( n g_r(f)\big(1+\varepsilon_n\big)\Big),
\end{equation}
where
\begin{equation}
\label{asymp0}
\lim_{n\to \infty} \varepsilon_n=0
\end{equation}
with probability one.
\end{prop}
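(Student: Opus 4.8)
The plan is to reduce everything to the strong law of large numbers (SLLN) applied to the logarithm of the wealth process. Taking logarithms in \eqref{wealth2} gives
\begin{equation*}
\ln W_n^f = \sum_{k=1}^n \ln(1 + f r_k),
\end{equation*}
so that $\ln W_n^f$ is a sum of iid random variables, each distributed as $\ln(1+fr)$. The first task is to confirm that these summands are genuine, integrable random variables with mean $g_r(f)$. For $f\in[0,1]$ and $r\geq -1$ (by \eqref{r1}), a case analysis on the sign of $r$ establishes the pointwise bound $|\ln(1+fr)|\leq |\ln(1+r)|$: when $r\geq 0$ one has $0\leq \ln(1+fr)\leq \ln(1+r)$, while when $-1\leq r<0$ one has $\ln(1+r)\leq \ln(1+fr)\leq 0$. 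Hence \eqref{r3} yields $\mathbb{E}|\ln(1+fr)|\leq \mathbb{E}|\ln(1+r)|<\infty$, the mean is exactly $g_r(f)$ from \eqref{F(f)}, and moreover $\mathbb{P}(r=-1)=0$, so the factors $1+fr_k$ are almost surely positive and the logarithm is well-defined.

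With integrability in hand, the SLLN applies directly to give
\begin{equation*}
\frac{1}{n}\ln W_n^f = \frac{1}{n}\sum_{k=1}^n \ln(1+fr_k) \longrightarrow g_r(f)
\end{equation*}
with probability one. This is the heart of the argument and the only place the iid structure is used.

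It then remains to recast this limit in the multiplicative form \eqref{eq:LLN-g1}. Since $g_r(f)\neq 0$ by hypothesis, I would simply set
\begin{equation*}
\varepsilon_n = \frac{1}{g_r(f)}\left(\frac{1}{n}\ln W_n^f - g_r(f)\right) = \frac{\ln W_n^f}{n\,g_r(f)} - 1,
\end{equation*}
so that $W_n^f = \exp\big(n g_r(f)(1+\varepsilon_n)\big)$ holds by construction, and the almost sure convergence $\tfrac{1}{n}\ln W_n^f \to g_r(f)$ immediately yields \eqref{asymp0}.

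I do not expect a serious obstacle: the result is essentially the SLLN expressed in multiplicative notation. The only points requiring care are the integrability verification, where \eqref{r1} and \eqref{r3} combine through the domination above, and the observation that the hypothesis $g_r(f)\neq 0$ is precisely what permits dividing to isolate $\varepsilon_n$; without it the representation \eqref{eq:LLN-g1} would be ill-posed.
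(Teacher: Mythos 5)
Your proof is correct and follows essentially the same route as the paper: the paper defines $\varepsilon_n=\frac{1}{ng_r(f)}\sum_{k=1}^n\big(\ln(1+fr_k)-g_r(f)\big)$, which is exactly your $\varepsilon_n$, and likewise concludes by the strong law of large numbers using \eqref{r3}. Your explicit verification of integrability via the domination $|\ln(1+fr)|\leq|\ln(1+r)|$ is a detail the paper leaves implicit, and it is a welcome addition rather than a deviation.
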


\begin{proof}
By \eqref{wealth2}, we have \eqref{eq:LLN-g1} with
\begin{equation}
\label{err0}
\varepsilon_n=\frac{1}{ng_r(f)}\sum_{k=1}^n\bigg( \ln(1+fr_k)-g_r(f)\bigg),
\end{equation}
and then \eqref{asymp0} follows by \eqref{r3} and the strong law of
large numbers.
\end{proof}

A stronger version of \eqref{r3} leads to a more detailed asymptotic
of $W_n^f$.

\begin{thm}
\label{th:asympt1}
Assume that \eqref{r1} holds and
\begin{equation}
\label{r3-3}
\mathbb{E}|\ln(1+r)|^2<\infty.
\end{equation}
Then
then, for every $f\in[0,1]$,  the
wealth process $W^f$ has an asymptotic representation
\begin{equation}
\label{eq:LLN-g2}
W^f_n=\exp\Big( n g_r(f)+\sqrt{n}\big(\sigma_r(f)\zeta_n+
\epsilon_n\big)\Big),
\end{equation}
where  $\zeta_n,\ n\geq 1, $ are standard Gaussian random variables,
\begin{equation*}
%\label{var00}
\sigma_r(f)=\Big(\mathbb{E}\big[\ln^2(1+fr)\big]-g_r^2(f)\Big)^{1/2},
\end{equation*}
and
\begin{equation*}
%\label{asymp2}
\lim_{n\to \infty}\epsilon_n=0
\end{equation*}
in probability.
\end{thm}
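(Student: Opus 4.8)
The plan is to reduce the claim to the classical central limit theorem and then to upgrade weak convergence to an honest Gaussian approximant. Taking logarithms in \eqref{wealth2} gives $\ln W_n^f=\sum_{k=1}^n X_k$ with $X_k:=\ln(1+fr_k)$; by \eqref{r1} these are well-defined, and by \eqref{r3-3} they are i.i.d. with finite second moment, mean $g_r(f)$, and variance $\sigma_r^2(f)=\mathbb{E}[\ln^2(1+fr)]-g_r^2(f)$. For $f=0$ the process is constant and the statement holds with $\epsilon_n\equiv 0$, so I may assume $f\in(0,1]$; then \eqref{r2} forces $\ln(1+fr)$ to take both positive and negative values with positive probability, so $\sigma_r(f)>0$. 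Writing $S_n:=n^{-1/2}\sum_{k=1}^n\big(X_k-g_r(f)\big)$, one has the exact identity $\ln W_n^f=n g_r(f)+\sqrt{n}\,S_n$, so it remains to produce standard Gaussians $\zeta_n$ with $S_n-\sigma_r(f)\zeta_n\to 0$ in probability and then to set $\epsilon_n:=S_n-\sigma_r(f)\zeta_n$.

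By the Lindeberg--L\'evy central limit theorem, $S_n$ converges weakly to a centered Gaussian law with variance $\sigma_r^2(f)$. The remaining task is a coupling: convergence in distribution does not by itself yield a remainder vanishing in probability, so I would realize, on a suitably enlarged probability space, an honest standard Gaussian $\zeta_n$ at distance $o_{\mathbb{P}}(1)$ from $\sigma_r(f)^{-1}S_n$. I plan to invoke the equivalence between weak convergence and convergence in the L\'evy--Prokhorov metric together with Strassen's coupling theorem: since $\pi_n:=\pi\big(\mathcal{L}(S_n),N(0,\sigma_r^2(f))\big)\to 0$, Strassen's theorem supplies, for each $n$, a coupling of $S_n$ with a variable $\sigma_r(f)\zeta_n\sim N(0,\sigma_r^2(f))$ satisfying $\mathbb{P}\big(|S_n-\sigma_r(f)\zeta_n|>\pi_n\big)\le\pi_n$, whence $\epsilon_n=S_n-\sigma_r(f)\zeta_n\to 0$ in probability.

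The main obstacle is precisely this upgrade from weak convergence to an in-probability statement, since the summands are discrete and $S_n$ may carry atoms, so a naive probability-integral-transform coupling is not directly available. An alternative to the Strassen route that I would keep in reserve is the randomized quantile transform: taking $U_n$ uniform on $(0,1)$ independent of $S_n$ and setting $\zeta_n=\Phi^{-1}\big(\sigma_r(f)^{-1}[F_n(S_n^-)+U_n(F_n(S_n)-F_n(S_n^-))]\big)$, where $F_n$ is the law of $S_n$, produces an exactly standard Gaussian $\zeta_n$; one then shows $S_n-\sigma_r(f)\zeta_n\to 0$ in probability using tightness of $\{S_n\}$ to confine it to a compact set, P\'olya's theorem to obtain $\|F_n-G\|_\infty\to 0$ for the continuous limit CDF $G$, and the uniform continuity of $\Phi^{-1}$ on compacts to transfer closeness of distribution-function values into closeness of the variables themselves. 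Either way, substituting $\epsilon_n$ back into $\ln W_n^f=n g_r(f)+\sqrt{n}\,(\sigma_r(f)\zeta_n+\epsilon_n)$ yields \eqref{eq:LLN-g2}.
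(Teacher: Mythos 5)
Your proposal is correct, and its skeleton is exactly the paper's: take logarithms in \eqref{wealth2} to get the i.i.d.\ sum $\ln W^f_n = n g_r(f)+\sqrt{n}\,S_n$ with $S_n=n^{-1/2}\sum_{k=1}^n\big(\ln(1+fr_k)-g_r(f)\big)$, and invoke the central limit theorem. The difference is one of rigor rather than route: the paper's entire proof is the identity $ng_r(f)\varepsilon_n=\sqrt{n}S_n=\sqrt{n}\big(\sigma_r(f)\zeta_n+\epsilon_n\big)$ followed by the words ``follows by the Central Limit Theorem,'' with a remark afterwards that the $\zeta_n$ may depend on $n$; it never explains how bona fide standard Gaussian variables $\zeta_n$ with $S_n-\sigma_r(f)\zeta_n\to 0$ in probability are to be produced from mere convergence in distribution. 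That upgrade is precisely what you identify as the main obstacle and then resolve, so your write-up supplies the one step the paper treats as immediate; either of your two devices (Strassen coupling via the L\'evy--Prokhorov metric, or the randomized quantile transform combined with P\'olya's theorem) closes it.

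Two small repairs are needed in your execution. First, Strassen's theorem couples the two \emph{laws}; to attach the coupled Gaussian to the actual random variable $S_n$ living on the original space you need a transfer (gluing) step, realized via a regular conditional distribution and an independent uniform variable on an enlarged space --- your phrase ``suitably enlarged probability space'' points at this, but it is the part a referee would ask you to write out, and it is also why your fallback construction is arguably the cleaner one, since it builds $\zeta_n$ directly as a function of $S_n$ and an independent uniform $U_n$. Second, the displayed formula misplaces the scaling: the randomized probability integral transform $V_n=F_n(S_n^-)+U_n\big(F_n(S_n)-F_n(S_n^-)\big)$ is uniform on $(0,1)$, and you should set $\zeta_n=\Phi^{-1}(V_n)$, equivalently $\zeta_n=\sigma_r(f)^{-1}G^{-1}(V_n)$ with $G$ the $N\big(0,\sigma_r^2(f)\big)$ distribution function; as written, $\Phi^{-1}\big(\sigma_r(f)^{-1}V_n\big)$ is neither standard Gaussian nor even well defined when $\sigma_r(f)^{-1}V_n\geq 1$. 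With that correction, $S_n-\sigma_r(f)\zeta_n=F_n^{-1}(V_n)-G^{-1}(V_n)\to 0$ in probability by uniform convergence of quantile functions on compact subsets of $(0,1)$, and substituting $\epsilon_n:=S_n-\sigma_r(f)\zeta_n$ gives \eqref{eq:LLN-g2}.
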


\begin{proof}
With $\varepsilon_n$ from \eqref{err0}, the result follows by the Central Limit Theorem:
$$
ng_r(f)\,\varepsilon_n= \sqrt{n}\left(\frac{1}{\sqrt{n}}
\sum_{k=1}^n\bigg(\ln(1+fr_k)-g_r(f)\bigg)\right)=
\sqrt{n}\big(\sigma_r(f)\zeta_n+
\epsilon_n\big).
$$
\end{proof}
Because the Central Limit Theorem gives convergence in distribution, the random variables
$\zeta_n$ in \eqref{eq:LLN-g2} can indeed depend on $n$.  Additional assumptions about the
distribution of $r$  \cite[Theorem 1]{Zolotarev-AE}
 lead to higher-order asymptotic expansions and a possibility to have
$\lim_{n\to \infty}\epsilon_n=0$ with probability one.

The following properties of the function $g_r$ are immediate
consequences of the definition and the assumptions \eqref{r1}--\eqref{r3}:
\begin{prop}
\label{prop:BasicF}
The function $f\mapsto g_r(f)$ is continuous on the closed interval
$[0,1]$ and infinitely differentiable in $(0,1)$. In particular,
\begin{equation}
\label{DF}
\frac{dg_r}{df}(f)=\mathbb{E}\left[\frac{r}{1+fr}\right],\ \
\frac{d^2g_r}{df^2}(f)=-\mathbb{E}\left[\frac{r^2}{(1+fr)^2}\right]<0.
\end{equation}
\end{prop}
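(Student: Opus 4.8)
The plan is to obtain both the regularity and the derivative formulas from a single integrability observation combined with the standard theorem on differentiation under the expectation sign. The decisive consequence of condition \eqref{r1}, namely $r\geq -1$ almost surely, is that for $f\in[0,1]$ one always has $1+fr\geq 0$, while for $f$ bounded away from $1$ the quantity $1+fr$ stays bounded away from zero uniformly in $r$. This is exactly why differentiability is asserted only on the open interval, and it is the source of every estimate below.

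First, for continuity on the closed interval $[0,1]$, I would establish the uniform domination $|\ln(1+fr)|\leq |\ln(1+r)|$, valid for all $f\in[0,1]$ and all $r\geq -1$. The verification splits on the sign of $r$: when $r\geq 0$ we have $fr\leq r$, hence $0\leq \ln(1+fr)\leq \ln(1+r)$; when $-1\leq r<0$ we have $fr\geq r$, hence $\ln(1+r)\leq \ln(1+fr)\leq 0$. In either case $|\ln(1+fr)|\leq|\ln(1+r)|$, which is integrable by \eqref{r3}, so dominated convergence yields continuity of $f\mapsto g_r(f)$ on all of $[0,1]$, endpoints included.

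Next, for differentiability and the formulas \eqref{DF}, I would fix an arbitrary compact subinterval $[a,b]\subset(0,1)$ and bound the formal $n$-th derivative of the integrand, which up to the factor $(-1)^{n-1}(n-1)!$ equals $r^n/(1+fr)^n=\big(r/(1+fr)\big)^n$, uniformly in $(f,r)$. Again splitting on the sign of $r$: for $r\geq 0$ and $f\geq a$ the map $r\mapsto r/(1+fr)$ increases to the limit $1/f\leq 1/a$, so $0\leq r/(1+fr)\leq 1/a$; for $-1\leq r<0$ and $f\leq b$ we have $1+fr\geq 1-b$ and $|r|\leq 1$, so $|r/(1+fr)|\leq 1/(1-b)$. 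Hence $|r/(1+fr)|\leq C$ with $C:=\max\{1/a,\,1/(1-b)\}$, and each formal derivative is dominated by the constant $(n-1)!\,C^n$, which is integrable on a probability space. The theorem on differentiation under the expectation then justifies differentiating \eqref{F(f)} term by term to every order on $(a,b)$; since $[a,b]$ is arbitrary, this gives infinite differentiability on all of $(0,1)$ together with the two identities in \eqref{DF}. The strict sign of the second derivative is then immediate: the integrand $r^2/(1+fr)^2$ is nonnegative and vanishes only on $\{r=0\}$, and the non-degeneracy condition \eqref{r2} forces $\mathbb{P}(r\neq 0)>0$, so the expectation is strictly positive and $d^2g_r/df^2<0$.

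The argument involves nothing deep; it is essentially the bookkeeping of these uniform bounds. The one genuine subtlety I expect is controlling $r/(1+fr)$ simultaneously as $f\to 1^-$ and $|r|\to\infty$, and restricting to compact subintervals of $(0,1)$ while invoking $r\geq -1$ is precisely what resolves it.
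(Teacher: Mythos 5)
Your proof is correct. The paper offers no argument for this proposition at all---it simply declares it an ``immediate consequence'' of the definition and of \eqref{r1}--\eqref{r3}---and your argument (the uniform domination $|\ln(1+fr)|\leq|\ln(1+r)|$ for continuity on $[0,1]$, plus the uniform bound $|r/(1+fr)|\leq\max\{1/a,\,1/(1-b)\}$ on compact subintervals justifying differentiation under the expectation, with \eqref{r2} giving strict negativity of the second derivative) is exactly the standard verification the paper leaves to the reader.
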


\begin{cor}
\label{cor1}
The function $g_r$ achieves its maximal value on
$[0,1]$ at a point  $f^*\in [0,1]$ and $g_r(f^*)\geq 0$.
If $ g_r(f^*)> 0$, then $f^*$ is unique.
\end{cor}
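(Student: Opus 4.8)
The plan is to deduce all three assertions from the two facts recorded in Proposition~\ref{prop:BasicF}: that $g_r$ is continuous on the compact interval $[0,1]$, and that $g_r''<0$ throughout $(0,1)$. Existence of a maximizer is immediate from the extreme value theorem: a continuous function on a compact set attains its supremum, so there is some $f^*\in[0,1]$ with $g_r(f^*)=\max_{f\in[0,1]}g_r(f)$. For the sign, I would evaluate $g_r$ at the left endpoint. From the definition \eqref{F(f)}, $g_r(0)=\mathbb{E}\ln(1+0\cdot r)=0$, and since $f^*$ is a maximizer this forces $g_r(f^*)\ge g_r(0)=0$.

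For uniqueness, the essential input is that $g_r''<0$ on $(0,1)$, which makes $g_r$ strictly concave there. I would argue by contradiction: suppose the maximal value $M$ is attained at two distinct points $f_1<f_2$ in $[0,1]$. The open interval $(f_1,f_2)$ is contained in $(0,1)$, where $g_r$ is smooth, so $g_r$ is continuous on $[f_1,f_2]$ and differentiable on $(f_1,f_2)$ with $g_r(f_1)=g_r(f_2)=M$. Rolle's theorem then produces a point $c\in(f_1,f_2)$ with $g_r'(c)=0$. Since $g_r''<0$ on $(f_1,f_2)$, the derivative $g_r'$ is strictly decreasing, so $g_r'>0$ on $(f_1,c)$ and $g_r'<0$ on $(c,f_2)$; consequently $g_r$ is strictly increasing on $[f_1,c]$ and $g_r(c)>g_r(f_1)=M$, contradicting the maximality of $M$. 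Hence the maximizer is unique.

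The step I expect to require the most care is exactly this uniqueness argument, because the maximizer may sit on the boundary $\{0,1\}$, where the naive stationarity condition $g_r'(f^*)=0$ need not hold and the chord form of strict concavity would have to be pushed to boundary endpoints by a limiting argument. Phrasing the contradiction through Rolle's theorem sidesteps both difficulties, since it only uses differentiability on the open interval strictly between the two putative maximizers, and that interval always lies inside $(0,1)$. I would close by recording the role of the hypothesis $g_r(f^*)>0$: together with $g_r(0)=0$ it guarantees $f^*\neq0$, which by the first formula in \eqref{DF} (giving $g_r'(f)\to\mathbb{E}[r]$ as $f\to0^+$) corresponds precisely to the edge condition $\mathbb{E}[r]>0$; in fact the argument above shows uniqueness for every value of $g_r(f^*)$, so this hypothesis merely isolates the nondegenerate case in which one actually bets.
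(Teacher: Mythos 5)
Your proof is correct and takes essentially the same route as the paper, whose one-line argument likewise rests on $g_r(0)=0$ together with the strict concavity supplied by the second formula in \eqref{DF}; your Rolle's-theorem step is just an expanded implementation of the standard fact that a strictly concave function has at most one maximizer, with the minor benefit of using $g_r''<0$ only on the open interval $(0,1)$ where Proposition \ref{prop:BasicF} guarantees it. Your closing observation is also sound: uniqueness holds irrespective of the sign of $g_r(f^*)$, so the hypothesis $g_r(f^*)>0$ in the corollary merely singles out the nondegenerate case.
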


\begin{proof}
Note that $g_r(0)=0$ and, by \eqref{DF}, the function $g_r$ is strictly concave
(or convex up) on $[0,1]$.
\end{proof}

While concavity of $g_r$ implies that $g_r$ achieves  a unique global maximal
value at a point $f^{**}$, it is possible that the domain of
the function $g_r$ is bigger than
the interval $[0,1]$ and  $f^{**}\notin [0,1]$. A simple way to exclude the possibility $f^{**}<0$ is to consider returns
$r$ that are not bounded from above: $\mathbb{P}(r>c)>0$ for all $c>0$:
in this case, the function $g_r(f)=\mathbb{E}\ln(1+fr)$ is not defined for $f<0$.
Similarly, if $\mathbb{P}(r<-1+\delta)>0$ for all $\delta>0$, then the
function $g_r$ is not defined for $f>1$, excluding the possibility $f^{**}>1.$

 Below are  more general sufficient conditions to ensure that the point
 $f^*\in [0,1]$ from Corollary \ref{cor1} is the point of global maximum of $g_r$:
$f^*=f^{**}$.

\begin{prop}
\label{prop:glob1}
If
\begin{align}
\label{global1-l}
&\lim_{f\to 0+} \mathbb{E}\left[\frac{r}{1+fr}\right]>0 \ \
\ \ \ {\rm and}\\
\label{global1-r}
&\lim_{f\to 1-} \mathbb{E}\left[\frac{r}{1+fr}\right]<0,
\end{align}
then there is a unique $f^*\in (0,1)$ such that
$$
g_r(f)< g_r(f^*)
$$
 for all $f$ in the domain of $g_r$.
 \end{prop}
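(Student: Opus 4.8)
The plan is to reduce everything to the first-order condition $g_r'(f)=0$ and then exploit the strict concavity already recorded in Proposition \ref{prop:BasicF}. First I would observe that the quantity appearing in both hypotheses is exactly the derivative $g_r'(f)=\mathbb{E}[r/(1+fr)]$ on $(0,1)$, so \eqref{global1-l} and \eqref{global1-r} say precisely that $\lim_{f\to 0+}g_r'(f)>0$ and $\lim_{f\to 1-}g_r'(f)<0$. Since $g_r''<0$ on $(0,1)$ by \eqref{DF}, the derivative $g_r'$ is continuous and strictly decreasing there; in particular both one-sided limits automatically exist as extended reals (they are the supremum and the infimum of $g_r'$), so the two inequalities in the hypotheses are meaningful.

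Next I would produce a zero of $g_r'$ inside $(0,1)$. Because $\lim_{f\to 0+}g_r'(f)>0$, continuity of $g_r'$ gives a point $f_0\in(0,1)$ with $g_r'(f_0)>0$; similarly $\lim_{f\to 1-}g_r'(f)<0$ yields a point $f_1\in(0,1)$ with $g_r'(f_1)<0$. Applying the Intermediate Value Theorem to the continuous function $g_r'$ on $[f_0,f_1]$ produces a point $f^*$ with $g_r'(f^*)=0$, and the strict monotonicity of $g_r'$ (coming from $g_r''<0$) forces this zero to be unique. Thus $f^*\in(0,1)$ is the unique critical point of $g_r$, and since $f^*\neq 0$ and $g_r(0)=0$, the value $g_r(f^*)$ will turn out to be strictly positive, consistent with Corollary \ref{cor1}.

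Finally I would upgrade the critical point to a strict global maximizer over the \emph{entire} domain of $g_r$. For this I would invoke the supporting-line inequality for concave functions: since $g_r$ is differentiable at the interior point $f^*$, concavity gives $g_r(f)\le g_r(f^*)+g_r'(f^*)(f-f^*)=g_r(f^*)$ for every $f$ in the domain, and strict concavity makes the inequality strict whenever $f\neq f^*$. This immediately yields $g_r(f)<g_r(f^*)$ for all admissible $f\neq f^*$, which is the assertion $f^*=f^{**}$.

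The step I expect to require the most care is the last one: the conclusion is about the full domain of $g_r$, which may be strictly larger than $[0,1]$, so a purely local second-derivative test does not suffice. The supporting-line bound is exactly what makes the argument global. To apply it I would first check that the domain of $g_r$ is an interval (the set of $f$ for which $1+fr>0$ almost surely is convex, using \eqref{r1}) and that $g_r$ is concave on that interval, which holds because $f\mapsto\ln(1+fr)$ is concave for each fixed admissible value of $r$ and concavity is preserved under taking expectations.
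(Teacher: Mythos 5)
Your proposal is correct and follows essentially the same route as the paper's proof: apply the intermediate value theorem to $g_r'(f)=\mathbb{E}[r/(1+fr)]$ to obtain a unique critical point $f^*\in(0,1)$, then use strict concavity to conclude it is the strict global maximizer over the whole domain. The only difference is one of detail: where the paper simply says ``it remains to use strong concavity,'' you spell out the supporting-line inequality and the convexity of the domain, which is exactly the right way to make that final step rigorous.
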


 \begin{proof}
 Together with the intermediate value theorem, conditions \eqref{global1-l}
 and \eqref{global1-r} imply that there is a unique $f^*\in (0,1)$ such that
 $$
 \frac{dg_r}{df}(f^*)=0.
 $$
 It remains to use strong concavity of $g_r$.
 \end{proof}

 Because $r\geq -1$, the expected value $\mathbb{E}[r]$ is always defined,
 although $\mathbb{E}[r]=+\infty$ is a possibility. Thus, by \eqref{DF},
 condition \eqref{global1-l} is equivalent to the intuitive idea of an edge:
  $$
  \mathbb{E}[r]>0,
  $$
 which, similar to   \eqref{edge1}, guarantees  that $g_r(f)>0$ for some $f\in (0,1)$.
 Condition \eqref{global1-r} can  be written as
  $$
  \mathbb{E}\left[\frac{r}{1+r}\right]<0,
  $$
  with the convention that the left-hand side can be $-\infty$. This condition
  does not appear in the simple Bernoulli model, but
 is necessary in general, to ensure that the edge is not too big and
leveraged gambling  ($f^*>1$) does not lead to an optimal strategy.

 As an example, consider the {\em general Bernoulli model} with
 \begin{equation}
 \label{GenBern}
 \mathbb{P}(r=-a)=1-p,\ \ \mathbb{P}(r=b)=p,\ \ 0<a\leq 1,\ b>0,\ 0<p<1.
 \end{equation}
 The function
 $$
 g_r(f)=p\ln (1+fb) + (1-p)\ln(1-fa)
 $$
 is defined on $(-1/b, 1/a)$,  achieves the global maximum at
 $$
 f^*=\frac{p}{a}-\frac{1-p}{b},
 $$
 and
 $$
 g_r(f^*)=p\ln p +(1-p)\ln (1-p) +\ln\frac{a+b}{a} - (p-1)\ln \frac{b}{a};
 $$
 we know that $g_r(f^*)\geq 0$, even though it is not at all obvious from the
 above expression.

 The NS-NL condition $f^*\in [0,1]$ becomes
 $$
 \frac{a}{a+b}\leq p \leq \min\left( \frac{ab}{a+b}\left(1+\frac{1}{b}\right), 1\right),
 $$
 and it is now easy to come up with a model in which $f^*>1$: for example, take
 $$
 a=0.1, \ b=0.5,\ p=0.5
 $$
 so that $f^*=4$. Given that a gain and a loss  in each bet are equally likely, but the amount of a gain
 is five times as much as that of a loss,
  a large value of $f^*$ is not surprising, although economical and financial implications of this
 type of leveraged betting are potentially very interesting and should be a subject of a
 separate investigation.

Because of the logarithmic function in the definition of $g_r$, the distribution of $r$ can have a rather
heavy right tail and still  satisfy \eqref{r3}. For example, consider
 \begin{equation}
 \label{Ch0}
 r=\eta^2-1,
 \end{equation}
 where $\eta$ has standard Cauchy distribution with probability density function
 $$
 h_{\eta}(x)=\frac{1}{\pi(1+x^2)},\  \  \ -\infty< x<+\infty.
 $$
 Then
 $$
 g_r(f)=\frac{2}{\pi}\int_0^{+\infty} \frac{\ln\big((1-f)+fx^2\big)}{1+x^2}\, dx= 2\ln\big(\sqrt{f}+\sqrt{1-f}\big),
 $$
 where the second equality follows from \cite[(4.295.7)]{Gradshtein-Ryzhyk}.
 As a result, we get a  closed-form answer
 $$
 f^*=\frac{1}{2},\ g_r(f^*)=\ln 2.
 $$

 A general  way to ensure \eqref{r1}--\eqref{r3} is to consider
 \begin{equation}
 \label{expo-model}
 r=e^{\xi}-1
 \end{equation}
 for some random variable $\xi$ such that  $\mathbb{P}(\xi>0)>0,\
 \mathbb{P}(\xi<0)>0$, and $\mathbb{E}|\xi|<\infty$;
  note that \eqref{Ch0} is a particular case, with $\xi=\ln\eta^2$.
 Then  \eqref{global1-l}  and  \eqref{global1-r} become, respectively,
 \begin{align}
\label{global1-l-e}
&\mathbb{E}e^{\xi}>1 \ \
\ \ \ {\rm and}\\
\label{global1-r-e}
&\mathbb{E}e^{-\xi}>1.
\end{align}
 For example, if $\xi$ is normal with mean $\mu\in \mathbb{R}$
 and variance $\sigma^2>0$, then
 $$
 \mathbb{E}e^{\xi}=e^{\mu+(\sigma^2/2)},
  \ \ \mathbb{E}e^{-\xi}=e^{-\mu+(\sigma^2/2)},
  $$
  and \eqref{global1-l-e}, \eqref{global1-r-e} are equivalent to
  \begin{equation}
  \label{log-nrmal}
  -\frac{\sigma^2}{2}<\mu<\frac{\sigma^2}{2},
  \end{equation}
   which, when interpreted in terms of returns,
   can indeed be considered as a ``reasonable'' edge condition: large values of  $|\mu|$
    do create a bias in one direction.
  Note that the corresponding $f^*$ is not available in closed
  form, but can be evaluated numerically.

\section{Continuous Compounding and a Case for L\'{e}vy Processes}
\label{sec:CC}
Continuous time compounding includes discrete compounding  as a particular case and
makes it possible to consider more general types of return processes.
The objective of this section is to show that continuous time compounding
that leads to a non-trivial and non-random  long-term growth rate of the resulting wealth
process effectively forces the return process to have independent increments.
The two main examples of such process are sums of iid random variables from the previous
section and the L\'{e}vy processes.

Writing \eqref{wealth2} as
\begin{equation}
\label{wealth1-dt}
W_{n+1}^f-W_n^f=\big(fW_n^f\big)\,r_{n+1},
\end{equation}
we see that a natural continuous time version of \eqref{wealth1-dt}
is
\begin{equation}
\label{wealth1-ct}
dW_{t}^f=fW_t^fdR_{t}
\end{equation}
for a suitable process $R=R_t,\ t\geq 0$ on a stochastic basis
\begin{equation*}
%\label{st-b}
\mathbb{F}=\Big(\Omega, \mathcal{F},\ \{\mathcal{F}_t\}_{t\geq 1},
\mathbb{P}\Big)
\end{equation*}
satisfying the usual conditions \cite[Definition I.1.1]{Protter}.
We interpret  \eqref{wealth1-ct} as an integral equation
\begin{equation}
\label{wealth1-ct-i}
W_{t}^f=1+f\int_0^tW_s^fdR_{s};
\end{equation}
 recall that  $W_0^f=1$ is the standing assumption.
Then the Bichteler-Dellacherie theorem \cite[Theorem III.22]{Protter}
implies that the process $R$ must be a semi-martingale (a sum of a martingale
and a process of bounded variation) with trajectories that, at every point,
 are continuous from the
right and have limits from the left. Furthermore, if we allow the process $R$
to have discontinuities, then, by
\cite[Theorem II.36]{Protter}, we need to modify
\eqref{wealth1-ct-i} further:
\begin{equation*}
%\label{wealth1-ct-i1}
W_{t}^f=1+f\int_0^tW_{s-}^fdR_{s},
\end{equation*}
where
$$
W_{s-}=\lim_{\varepsilon\to 0, \varepsilon>0} W_{s-\varepsilon},
$$
and, assuming $R_0=0$,  the process $W^f$ becomes  the Dol\'{e}ans-Dade exponential
\begin{equation}
\label{DDE-1}
W^f_t=\exp\left(fR_t-\frac{f^2\langle R^c\rangle_t}{2}
\right)\prod_{0<s\leq t}
(1+f\triangle R_s)\,e^{-f\triangle R_s}.
\end{equation}
In \eqref{DDE-1}, $\langle R^c\rangle$ is the quadratic variation process of the
continuous martingale component of $R$ and $\triangle R_s=R_s-R_{s-}$.

A natural analog of \eqref{r1} is
\begin{equation}
\label{r1-ct}
\triangle R_s\geq -1,
\end{equation}
and then \eqref{DDE-1} becomes
\begin{equation}
\label{DDE-2}
W^f_t=\exp\left(fR_t-\frac{f^2\langle R^c\rangle_t}{2}+
\sum_{0<s\leq t}\Big( \ln
(1+f\triangle R_s) -f\triangle R_s\Big)\right).
\end{equation}

To proceed, let us assume that the trajectories of $R$ are continuous:
$\triangle R_s=0$ for all $s$ so that
\begin{equation*}
%\label{DDE-2-cont}
W^f_t=\exp\left(fR_t-f^2\langle R^c\rangle_t\right).
\end{equation*}

If, similar to \eqref{rate-dt},  we define the
long-term growth rate $g_R(f)$ by
\begin{equation}
\label{gr-ct0}
g_R(f)=\lim_{t\to \infty} \frac{\ln W^f_t}{t},
\end{equation}
then  we need the limits
\begin{equation}
\label{trip0-cont}
\mu:=\lim_{t\to \infty} \frac{R_t}{t},\ \ \
\sigma^2:=\lim_{t\to \infty} \frac{\langle R^c\rangle_t}{t}
\end{equation}
to exist with probability one and with non-random numbers $\mu,\sigma^2.$
 Being a semi-martingale without jumps, the process
$R$ has a representation
\begin{equation}
\label{sm-cont}
R_t=A_t+R^c_t,
\end{equation}
where $A$ is process of bounded variation;
cf. \cite[Theorem II.2.34]{LimitTheoremsforStochasticProcesses}.
Then \eqref{trip0-cont}
imply that, for large $t$,
\begin{equation}
\label{linear0}
A_t\approx \mu t,\ \
\langle R^c\rangle_t\approx \sigma^2 t,
\end{equation}
 that is, a natural
way to achieve \eqref{trip0-cont} is to consider the process $R$ of the form
\begin{equation*}
%\label{trip1-cont}
R_t=\mu t+\sigma\,B_t,
\end{equation*}
where $\sigma>0$ and $B=B_t$ is a standard Brownian motion.
 Then
 \begin{equation}
\label{DDE-3-cont}
W^f_t=\exp\left(f\mu t+f \sigma\,B_t-\frac{f^2\sigma^2 t}{2}\right),
\end{equation}
and we come to the following conclusion:
{\em continuous time compounding with a continuous return process
effectively implies that the wealth process is a geometric Brownian motion}.

 The long-term growth rate
 \eqref{gr-ct0} becomes
 \begin{equation}
 \label{gr-ctL-cont}
 g_R(f)=f\mu-\frac{f^2\sigma^2}{2},
\end{equation}
so that
\begin{equation*}
%\label{f*-ct-cont}
f^*=\frac{\mu}{\sigma^2},\ \ g_R(f^*)=\frac{\mu^2}{2\sigma^2},
\end{equation*}
and the NS-NL condition is
\begin{equation*}
%\label{sf-ct-cont}
0<\mu<\sigma^2.
\end{equation*}
Even though these results are not especially sophisticated, we will see
in the next section (Theorem \ref{prop0}) that the process
\eqref{DDE-3-cont} naturally appears as the continuous-time, or high
frequency,  limit of discrete-time compounding for a large class of returns.

On the other hand, if we assume that the process $R$ is purely
discontinuous, with jumps $\triangle R_{k}=r_k$
 at times $s=k\in \{1,2,3,\ldots\}$,
then
$$
R_t=0,\ t\in (0,1),\ R_t=\sum_{k=1}^{\lfloor t \rfloor} r_k,\ t\geq 1,
$$
and  \eqref{DDE-1} becomes \eqref{wealth2}.
Accordingly, we will now investigate the general case \eqref{DDE-1} when the
process $R$  has  both a continuous component and  jumps.
To this end, we use
 \cite[Proposition II.1.16]{LimitTheoremsforStochasticProcesses}
 and introduce the jump measure
 $\mu^R=\mu^R(dx,ds)$ of the process $R$ by putting a point mass at
 every point in space-time where the process $R$ has a jump:
 \begin{equation}
 \label{JumpMeasure}
 \mu^R(dx,ds) =
 \sum_{s > 0}\delta_{(\triangle R_s,s)}(dx, ds);
 \end{equation}
  note that both the time $s$ and size $\triangle R_s$ of the jump can be random.
  In particular, with \eqref{r1-ct} in mind,
\begin{equation}
\label{sum1}
\sum_{0<s\leq t} \Big(\ln
(1+f\triangle R_s) -f\triangle R_s\Big)=
\int_0^t\zint_{-1}^{+\infty}
\big(\ln(1+fx)-fx\big)\mu^R(dx,ds);
\end{equation}
here and below,
\begin{equation}
\label{zint}
\zint_a^b, \ \ \ a<0<b,
\end{equation}
stands for
$$
\int\limits_{(a,0)\bigcup(0,b)}.
$$

By \cite[Proposition II.2.9 and Theorem II.2.34]{LimitTheoremsforStochasticProcesses},
and keeping in mind \eqref{r1-ct}, we get the following generalization of \eqref{sm-cont}:
\begin{equation}
\label{sm-general}
\begin{split}
R_t&=A_t+R^c_t+
\int_0^t\zint_{1}^{+\infty}
x\mu^R(dx,ds)\\
&+
\int_0^t\zint_{-1}^1
x\big(\mu^R(dx,ds)-\nu(dx,s)da_s\big),
\end{split}
\end{equation}
where $a=a_t$ is a predictable non-decreasing process
and $\nu=\nu(dx,t)$  is the non-negative random time-dependent
measure on $(-1,0)\bigcup(0,+\infty)$ with the property
\begin{equation*}
%\label{nu-int0}
\zint_{-1}^{+\infty}\min(1,x^2)\nu(dx,t)\leq 1
\end{equation*}
for all $t\geq 0$ and $\omega\in \Omega$.
Moreover
\begin{align}
\label{trip-A}
A_t&=\int_0^t \mu_s\, da_s \ {\rm \  for\  some\  predictable \ process} \
\mu=\mu_t,\\
\label{trip-B}
\langle R^c\rangle_t&=\int_0^t \sigma^2_s\,da_s\
{\rm \  for\  some\  predictable \ process} \ \sigma=\sigma_t,
\end{align}
and the process
$$
t \mapsto \int_0^t\zint_{-1}^{+\infty}
h(x)\big(\mu^R(dx,ds)-\nu(dx,s)da_s\big)
$$
is a martingale for every bounded measurable function $h=h(x)$ such that
$\limsup_{x\to 0}|h(x)|/|x|<\infty$.

To proceed, we assume that
$$
\mathbb{E} \int_0^t\zint_{-1}^{+\infty} |\ln(1+x)|\,
\nu(dx,s)\,da_s<\infty,\ t>0,
$$
 which is  a generalization  of condition \eqref{r3}.
Then, by
\cite[Theorem II.1.8]{LimitTheoremsforStochasticProcesses}, the process
$$
t\mapsto \int_0^t\zint_{-1}^{+\infty}
\ln(1+x)\big(\mu^R(dx,ds)-\nu(dx,s)da_s\big)
$$
is a martingale.

 Next, we combine \eqref{DDE-2}, \eqref{sum1}, and \eqref{sm-general},
 and re-arrange the terms so that the logarithm  of the wealth process
 becomes
\begin{equation}
\label{logW-g}
\begin{split}
\ln W_t^f&=fA_t+fR^c_t-\frac{f^2}{2}\langle R^c_t \rangle
-f\int_0^t\zint_{-1}^1x\nu(dx,s)da_s\\
&+ \int_0^t\zint_{-1}^{+\infty} \ln(1+fx)\nu(dx,s)da_s+M^f_t,
\end{split}
\end{equation}
where
$$
M^f_t=\int_0^t\zint_{-1}^{+\infty}
\ln(1+fx)\big(\mu^R(dx,ds)-\nu(dx,s)da_s\big).
$$
In general, for equality \eqref{logW-g} to hold,
 we need to make an additional assumption
\begin{equation}
\label{nu-int1}
\zint_{-1}^{1}x\nu(dx,t)<\infty
\end{equation}
for all $t\geq 0$ and $\omega\in \Omega$.

 In the particular case  \eqref{wealth2},
 \begin{itemize}
 \item $a_s=\lfloor s \rfloor$ is the step function, with unit jumps at positive integers, so that
     $da_s$ is the collection of point masses at positive integers;
     \item $\nu(dx,s)=F^R(dx),$ where
     $F^R$ is the cumulative distribution function of the random variable $r$,
     so that \eqref{nu-int1} holds automatically;
     \item $\mu_t=g_f(r)+\int_{-1}^1 x\,F^R(dx)$,
     $R^c_t=0$, $\sigma_t=0$;
 	\item $M^f_t=\sum_{0<k\leq t}\big(\ln(1+fr_k)-g_r(f)\big)$;
 	\item condition \eqref{LogVar-Levy}  is \eqref{r3-3}.
 \end{itemize}

A natural way to reconcile \eqref{linear0} with \eqref{trip-A},
 \eqref{trip-B} is to take $\mu_t=\mu$, $\sigma_t=\sigma$ for some
 non-random numbers $\mu\in \mathbb{R}$, $\sigma\geq 0$, and  a non-random
 non-decreasing function $a=a_t$ with the property
 \begin{equation}
 \label{limit-a}
 \lim_{t\to+\infty}\frac{a_t}{t}=1.
 \end{equation}
 Then, to have a non-random almost-sure limit
 $$
 \lim_{t\to \infty} \frac{1}{t}\int_0^t\int_{-1}^{+\infty} \varphi(x)
 \nu(dx,s)da_s
 $$
 for a sufficiently rich class of non-random test functions $\varphi$,
 we have to assume that there exists a  non-random non-negative
 measure $F^R=F^R(dx)$ on $(-1,0)\bigcup(0,+\infty)$ such that
 \begin{equation}
 \label{intF1-1}
 \zint_{-1}^{+\infty}\min( |x|,1)\, F^R(dx)<\infty
 \end{equation}
 and, for large $s$,
 $$
 \nu(dx,s)\approx F^R(dx).
 $$
 As a result, if
 \begin{equation}
 \label{LevyMeasure0}
 \nu(dx,s)= F^R(dx)
 \end{equation}
for all $s$, then
\begin{equation}
\label{triple-nr1}
A_t=\mu a_t,\  \langle R^c\rangle_t=\sigma^2a_t,\
 \nu(dx,t)=F^R(dx)a_t
\end{equation}
are all non-random, and
  \cite[Theorem II.4.15]{LimitTheoremsforStochasticProcesses}
implies that  $R$ is a  {\em process with independent increments}.
Furthermore, \eqref{triple-nr1} and the strong law of large numbers for
martingales imply
\begin{equation*}
%\label{SLLN-M1}
\mathbb{P}\left(\lim_{t\to \infty} \frac{R^c_t}{t}=0\right)=1;
\end{equation*}
cf. \cite[Corollary 1 to Theorem II.6.10]{LSh-M}.
Similarly, if
\begin{equation}
\label{LogVar-Levy}
\zint_{-1}^{+\infty} \ln^2(1+x)\, F^R(dx)<\infty,
\end{equation}
then  $M^f$ is a square-integrable martingale and
 \begin{equation*}
 %\label{mart-ssl}
\mathbb{P}\left( \lim_{t\to \infty} \frac{M^f_t}{t}=0\right)=1.
 \end{equation*}

 Writing
 $$
 \bar{\mu}=\mu-\int_{-1}^{1}x F^R(dx)
$$
the long-term growth rate \eqref{gr-ct0} becomes
 \begin{equation}
 \label{gr-ctL}
 g_R(f)=f\bar{\mu}-\frac{f^2\sigma^2}{2}+
\zint_{-1}^{\infty}
\ln(1+fx) F^R(dx),
\end{equation}
which does include both \eqref{F(f)} and \eqref{gr-ctL-cont} as particular cases.
By direct computation, the function $f\mapsto g_R(f)$ is concave and the
domain of the function contains $[0,1]$.

Similar to Proposition \ref{prop:glob1},  we have the following result.
\begin{thm}
\label{th-LevyRate}
Consider  continuous-time compounding with return process
\begin{equation}
\label{Return-Levy0}
\begin{split}
R_t&=A_t+R_t^c+
\int_0^t\zint_{1}^{+\infty}
x\mu^R(dx,ds)\\
&+
\int_0^t\zint_{-1}^1
x\big(\mu^R(dx,ds)-\nu(dx,s)da_s\big),
\end{split}
\end{equation}
where the random measure $\mu^R$ is from \eqref{JumpMeasure},
and assume that equalities  \eqref{limit-a} and \eqref{triple-nr1} hold.
If $F^R$ satisfies \eqref{intF1-1},  \eqref{LogVar-Levy}, and
\begin{align*}
%\label{global-ct-l}
&\lim_{f\to 0+}
\zint_{-1}^{\infty}
\frac{x}{1+fx}\, F^R(dx)>-\bar{\mu},\\
%\label{global-ct-r}
&\lim_{f\to 1-}
\zint_{-1}^{\infty}
\frac{x}{1+fx}\, F^R(dx)<\sigma^2-\bar{\mu},
\end{align*}
then  the long-term growth rate is given by \eqref{gr-ctL},  and there exists
 a unique $f^*\in (0,1)$ such that
$$
g_R(f)< g_R(f^*)
$$
 for all $f$ in the domain of $g_R$.
\end{thm}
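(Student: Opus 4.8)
The plan is to prove the theorem in two stages, mirroring the structure of Proposition \ref{prop:glob1}: first establish that the almost-sure limit \eqref{gr-ct0} exists and equals \eqref{gr-ctL}, and then run the concavity-plus-intermediate-value argument to locate a unique interior maximizer.

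For the first stage, I would start from the decomposition \eqref{logW-g} of $\ln W_t^f$, substitute the non-random triplet \eqref{triple-nr1}, divide by $t$, and pass to the limit term by term. The drift and quadratic-variation terms become $f\mu a_t/t$ and $f^2\sigma^2 a_t/(2t)$, which converge to $f\mu$ and $f^2\sigma^2/2$ by \eqref{limit-a}; the two integrals against $\nu(dx,s)\,da_s=F^R(dx)\,da_s$ become $(a_t/t)$ times the fixed integrals $\zint_{-1}^{1}x\,F^R(dx)$ and $\zint_{-1}^{+\infty}\ln(1+fx)\,F^R(dx)$, which again converge by \eqref{limit-a}, their finiteness being guaranteed by \eqref{intF1-1} and \eqref{LogVar-Levy}. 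The two remaining terms, $fR_t^c/t$ and $M_t^f/t$, vanish almost surely by the strong law of large numbers for martingales already invoked in the excerpt, with the square-integrability of $M^f$ following from \eqref{LogVar-Levy}. Collecting the surviving terms and using $\bar{\mu}=\mu-\int_{-1}^{1}x\,F^R(dx)$ then produces exactly \eqref{gr-ctL}.

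For the second stage, I would differentiate \eqref{gr-ctL} under the integral sign to obtain
\begin{equation*}
g_R'(f)=\bar{\mu}-f\sigma^2+\zint_{-1}^{+\infty}\frac{x}{1+fx}\,F^R(dx),
\end{equation*}
and read the two hypotheses of the theorem as the boundary inequalities $g_R'(0+)>0$ and $g_R'(1-)<0$. Since the excerpt already records that $g_R$ is concave --- indeed strictly concave, its second derivative being $-\sigma^2-\zint_{-1}^{+\infty}x^2(1+fx)^{-2}F^R(dx)<0$ by the same non-degeneracy that was used in Proposition \ref{prop:BasicF} --- the map $g_R'$ is strictly decreasing, so the intermediate value theorem yields a unique $f^*\in(0,1)$ with $g_R'(f^*)=0$, and strict concavity upgrades this stationary point to the unique global maximizer over the entire domain of $g_R$.

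The step I expect to require the most care is the term-by-term passage to the limit in the first stage: one must justify that the almost-sure martingale limits $R_t^c/t\to 0$ and $M_t^f/t\to 0$ hold \emph{simultaneously} with the deterministic convergences, so that the full quotient $\ln W_t^f/t$ converges almost surely rather than only summand by summand along separate null sets. This is handled by intersecting the relevant null sets and confirming that $M^f$ is genuinely square-integrable under \eqref{LogVar-Levy}; once the limit is secured, the differentiation under the integral and the concavity bookkeeping in the second stage are routine.
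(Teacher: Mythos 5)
Your proposal is correct and takes essentially the same route as the paper's (largely implicit) proof: the growth-rate formula \eqref{gr-ctL} is obtained exactly as in the text preceding the theorem, by substituting the non-random characteristics \eqref{triple-nr1} into \eqref{logW-g}, invoking \eqref{limit-a} for the deterministic terms and the martingale strong law (with square-integrability of $M^f$ from \eqref{LogVar-Levy}) for $R^c_t/t$ and $M^f_t/t$, and the unique interior maximizer is then found via the intermediate value theorem plus strict concavity, precisely as in the proof of Proposition \ref{prop:glob1}. Your additional remark about intersecting the null sets so the almost-sure limits hold simultaneously is a point of rigor the paper leaves implicit, but it does not change the argument.
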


 By the Lebesgue decomposition theorem, the measure corresponding to the
 function $a=a_t$ has a discrete, absolutely continuous, and singular components.
 With \eqref{limit-a} in mind, a natural choice of the discrete component is $a_t=\lfloor t \rfloor$,
 which, as we saw, corresponds to discrete compounding discussed in the previous
 section. A natural choice of the absolutely continuous component is
\begin{equation*}
%\label{Poisson}
a_t=t.
\end{equation*}
Then
\begin{equation*}
%\label{LevyTripl1}
A_t=\mu t,\  R^c_t=\sigma B_t,\ \nu(dx,t)da_t=F^R(dx)\,dt,
\end{equation*}
where $B$ is a standard Brownian motion.
By \cite[Corollary II.4.19]{LimitTheoremsforStochasticProcesses},
we conclude that the process $R$ has  independent and stationary increments,
that is, {\em $R$ is a L\'{e}vy process}.
In this case, equality \eqref{Return-Levy0} is known as the L\'{e}vy-It\^{o} decomposition
of the process $R$; cf. \cite[Theorem 19.2]{Sato}.

We do not consider  the singular case in this paper
and leave it for future investigation.

\section{Continuous Limit of  Discrete Compounding}

\subsection{A (Simple)  Random Walk Model}
Following the methodology in \cite[Section 7.1]{Thorp06},
 we assume  compounding a sufficiently large  number $n$ of
 bets  in a   time  period $[0,T]$. The returns $r_{n,1}, r_{n,2},\ldots $
 of the bets are
\begin{equation}
\label{return-nk}
 r_{n,k} =
\frac{\mu}{n} + \frac{\sigma}{\sqrt{n}}\,\xi_{n,k}
\end{equation}
for some $\mu>0$, $\sigma>0$ and
independent  identically distributed
random variables $\xi_{n,k},\ k=1,2,\ldots,$ with mean $0$ and variance $1$.
The classical simple random walk corresponds to
$\mathbb{P}(\xi_{n,k}=\pm 1)=1/2$ and can be considered a
{\em high frequency} version of \eqref{return1}.
Similar to \eqref{r1}, we need $r_{n,k}\geq -1$, which, in general,  can only be
 achieved with  {\em uniform boundedness} of $\xi_{n,k}$:
\begin{equation}
\label{eq:bnd1}
|\xi_{n,k}|\leq C_0,
\end{equation}
and then,  with no loss of generality, we assume that
$n$ is large enough so that
\begin{equation}
\label{small-r}
|r_{n,k}|\leq \frac{1}{2}.
\end{equation}

Similar to \eqref{edge1}, a condition to have an edge is
$$
\mathbb{E}[r_{n,k}] = \frac{\mu}{n} > 0,
$$
and, similar to  \eqref{wealth2}, given $n$ bets per unit time period,
 with exposure $f \in [0,1]$ in each bet, we get the following formula for the
 total wealth $W_t^{n,f}$  at time $t\in (0,T]$ assuming $W_0=1$:
\begin{equation}
\label{Wntf}
W_t^{n,f} = \prod_{k=1}^{\lfloor nt \rfloor}\big(1+fr_{n,k}\big);
\end{equation}
 $\lfloor nt \rfloor$ denotes the  largest integer less than $nt$.

Let $B=B_t,\ t\geq 0,$ be a standard Brownian motion on
a stochastic basis $(\Omega, \mathcal{F}, \{\mathcal{F}_t\}_{t\geq 0},\mathbb{P})$ satisfying the usual conditions, and define the process
\begin{equation}
\label{wealth2-cont}
W^f_t= \exp\left(\left(f \mu - \frac{f^2\sigma^2}{2}\right)t + f \sigma B_t\right).
\end{equation}
Note that \eqref{wealth2-cont} is a particular case of \eqref{DDE-3-cont}.

\begin{thm}
\label{prop0}
For every $T>0$ and every $f\in [0,1]$, the sequence of processes
$ \big(W_t^{n,f},\ n\geq 1,\ t\in [0,T]\big)$ converges in law to the process $W^f=W^f_t,\ t\in [0,T]$.
\end{thm}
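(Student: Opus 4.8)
The plan is to pass to logarithms and reduce the claim to an invariance principle together with a law of large numbers for the triangular array $\{\xi_{n,k}\}$. By \eqref{Wntf},
$$
\ln W_t^{n,f}=\sum_{k=1}^{\lfloor nt\rfloor}\ln\big(1+fr_{n,k}\big),
$$
and, since $f\in[0,1]$ and \eqref{small-r} gives $|fr_{n,k}|\le|r_{n,k}|\le 1/2$, the expansion $\ln(1+u)=u-u^2/2+\rho(u)$ with $|\rho(u)|\le C|u|^3$ for $|u|\le 1/2$ applies term by term. Substituting \eqref{return-nk} and grouping by powers of $n$, I would write
$$
\ln W_t^{n,f}=\frac{f\mu}{n}\,\lfloor nt\rfloor+\frac{f\sigma}{\sqrt{n}}\sum_{k=1}^{\lfloor nt\rfloor}\xi_{n,k}-\frac{f^2\sigma^2}{2n}\sum_{k=1}^{\lfloor nt\rfloor}\xi_{n,k}^2+\mathcal{E}_t^n,
$$
where $\mathcal{E}_t^n$ collects the cross term $-\tfrac{f^2\mu\sigma}{n^{3/2}}\sum_{k\le nt}\xi_{n,k}$, the $O(n^{-1})$ drift-square term, and the cubic remainder $\sum_{k\le nt}\rho(fr_{n,k})$. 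The three displayed terms are designed to produce exactly the three ingredients of \eqref{wealth2-cont}.

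Each piece is then handled separately. The first term satisfies $\frac{f\mu}{n}\lfloor nt\rfloor\to f\mu t$ uniformly in $t\in[0,T]$. For the second, the uniform bound \eqref{eq:bnd1} makes the Lindeberg condition for the array trivial (for $n$ large, $|\xi_{n,k}|\le C_0<\varepsilon\sqrt{n}$), so Donsker's invariance principle gives that the process $t\mapsto\frac{f\sigma}{\sqrt n}\sum_{k=1}^{\lfloor nt\rfloor}\xi_{n,k}$ converges in law in the Skorokhod space $D[0,T]$ to $f\sigma B$. For the third, $\xi_{n,k}^2$ has mean $1$ and variance at most $C_0^4$, so the centered partial-sum process $\frac{1}{\sqrt n}\sum_{k\le nt}(\xi_{n,k}^2-1)$ is tight; hence $\frac{1}{n}\sum_{k=1}^{\lfloor nt\rfloor}\xi_{n,k}^2\to t$ uniformly in probability and the quadratic term converges uniformly to $-\frac{f^2\sigma^2}{2}t$. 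The same tightness gives $\sup_{t\le T}|\mathcal{E}_t^n|\to 0$ in probability: the cubic remainder is bounded deterministically by $C\sum_{k\le nT}|r_{n,k}|^3=O(n^{-1/2})$, while the cross term is $O_P(n^{-1})$ uniformly because $\sup_{t\le T}\big|\frac{1}{\sqrt n}\sum_{k\le nt}\xi_{n,k}\big|=O_P(1)$.

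Combining, the fluctuation term converges in law in $D[0,T]$ to $f\sigma B$, while the remaining terms converge uniformly in probability to the deterministic continuous function $(f\mu-\tfrac{f^2\sigma^2}{2})t$; Slutsky's theorem in the path space then yields $\ln W^{n,f}_t\Rightarrow(f\mu-\tfrac{f^2\sigma^2}{2})t+f\sigma B_t$ in $D[0,T]$. Since the exponential is continuous and the limit lives in $C[0,T]$, the continuous mapping theorem delivers $W^{n,f}\Rightarrow W^f$, which is Theorem \ref{prop0}. The main obstacle is not any single estimate but the upgrade from fixed-$t$ (one-dimensional) convergence to convergence of the whole trajectory: one must invoke the invariance principle for the triangular array and, crucially, control the quadratic and remainder terms \emph{uniformly} in $t\in[0,T]$, so that the Slutsky and continuous-mapping steps are legitimate in $D[0,T]$ rather than only for each marginal.
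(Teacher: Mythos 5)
Your proof is correct, but it takes a genuinely different route from the paper's. You expand $\ln(1+fr_{n,k})$ explicitly by Taylor's formula and split $\ln W^{n,f}$ into a drift term, a Gaussian fluctuation term, a quadratic term, and remainders, then treat each piece by hand: the triangular-array (Lindeberg) version of Donsker's theorem for $\frac{f\sigma}{\sqrt{n}}\sum_{k\le nt}\xi_{n,k}$ (Lindeberg being trivial under \eqref{eq:bnd1}), maximal-inequality estimates giving the uniform-in-$t$ convergence $\frac{1}{n}\sum_{k\le nt}\xi_{n,k}^2\to t$ and the vanishing of the error terms, and finally Slutsky plus continuous mapping in $D[0,T]$ --- where you correctly exploit that the non-Gaussian part converges \emph{uniformly} to a continuous deterministic function, so the failure of joint continuity of addition on $D\times D$ is not an issue. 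The paper instead delegates all of the functional (tightness and path-space) work to the semimartingale limit theory of Jacod--Shiryaev: after a uniform-integrability estimate, it verifies the three convergence-of-characteristics conditions \eqref{mean0}, \eqref{var0}, \eqref{jump0} --- whose computational content is essentially the same Taylor expansion you perform --- and invokes Theorem VII.3.7 there, modifying the proof of Corollary VII.3.11. Your approach buys self-containedness and elementary tools, and it works precisely because of the boundedness assumption \eqref{eq:bnd1}; the paper's approach buys a framework that carries over with little change to the later, harder results (e.g., Theorem \ref{prop0-1}, where the $\xi_{n,k}$ are unbounded and only the Lindeberg-type condition \eqref{moment1} holds, and the subsequent L\'evy-driven limits with jumps), settings in which an explicit pathwise decomposition like yours would be considerably more painful to control.
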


\begin{proof}
Writing
$$
Y^{n,f}_t= \ln W_t^{n,f},
$$
the objective is to show weak convergence, as $n\to \infty$,  of $Y^{n,f}$ to the process
$$
Y^f_t=\left( f \mu - \frac{f^2\sigma^2}{2}\right)t + f \sigma B_t,\ \  t\in [0,T].
$$
The proof relies on the method of  predictable  characteristics
for semimartingales  from  \cite{LimitTheoremsforStochasticProcesses}.
More specifically, we make suitable    changes in the proof of  Corollary VII.3.11.

By \eqref{wealth2-cont}
$$
Y^{n,f}_t=\sum_{k=1}^{\lfloor nt \rfloor} \ln(1+fr_{n,k}).
$$
Then \eqref{return-nk} and \eqref{eq:bnd1} imply
\begin{equation*}
%\label{ineq2}
\mathbb{E}\bigg( Y^{n,f}_t - \mathbb{E}Y^{n,f}_t\bigg)^4
\leq \frac{C_0^4\sigma^4}{n^2}\big(nT+3nT(nT-1)\big)
\leq {3C_0^4\sigma^4T^2},
\end{equation*}
from which uniform integrability of the family
$\{Y^{n,f}_t,\ n\geq 1, \ t\in [0,T]\}$ follows.

Then, by \cite[Theorem VII.3.7]{LimitTheoremsforStochasticProcesses}, it suffices to establish the following:
\begin{align}
\label{mean0}
\lim_{n\to \infty}&
\sup_{t \leq T}
 \left|\lfloor nt \rfloor \mathbb{E}\big[\ln(1+fr_{n,1})\big] - \left(f\mu-\frac{f^2\sigma^2}{2}\right)t \right| =0,\\
 \label{var0}
\lim_{n\to \infty} &
\lfloor nt \rfloor \left(\mathbb{E}
\big(\ln(1+fr_{n,1})\big)^2
-
\bigg(\mathbb{E}\big[\ln(1+fr_{n,1})\big]\bigg)^2 \right)= f^2 \sigma^2 t,\ t\in [0,T],\\
\label{jump0}
\lim_{n\to \infty} &\lfloor nt \rfloor \mathbb{E}
\big[\phi\big(\ln(1+fr_{n,1})\big)\big] = 0,\ t\in [0,T].
\end{align}
Equality \eqref{jump0} must hold for all functions  $\phi=\phi(x),
\ x\in \mathbb{R},$ that are
continuous and bounded on $\mathbb{R}$ and satisfy $\phi(x)=o(x^2),
 \ x\to 0$, that is,
 \begin{equation}
 \label{phi0}
\lim_{x\to 0} \frac{\phi(x)}{x^2}=0.
  \end{equation}

Equalities \eqref{mean0} and \eqref{var0} follow from
$$
r_{n,1}^2=\frac{\sigma^2}{n}\, \xi_{n,1}^2
+ \frac{2\mu\sigma\xi_{n,1}}{n^{3/2}}+\frac{\mu^2}{n^2},
$$
together with \eqref{small-r} and an elementary inequality
$$
\left| \ln(1+x)-x-\frac{x^2}{2} \right| \leq |x|^3,\ |x|\leq \frac{1}{2}.
$$
In particular,
\begin{equation*}
%\label{square0}
\mathbb{E}
\big[\big(\ln(1+fr_{n,1})\big)^2\big] = \frac{f^2\sigma^2}{n}+o(1/n),
\ n\to +\infty.
\end{equation*}
To establish \eqref{jump0}, note that \eqref{phi0} and \eqref{return-nk}
imply
$$
\phi\big(\ln (1+fr_{n,1})\big) = o(1/n),
\ n\to +\infty.
$$
\end{proof}

Similar to \eqref{rate-dt}, we  define the long-term continuous time growth
rate
\begin{equation*}
%\label{rate-ct0}
g(f)=\lim_{t\to \infty} \frac{1}{t}\ln W^f_t.
\end{equation*}
Then a simple computation show that
$$
g(f)=f\mu  - \frac{f^2\sigma^2}{2},
$$
 and so
 \begin{equation}
 \label{optf-rv}
 f^*=\frac{\mu}{\sigma^2}
 \end{equation}
  achieves the maximal  long-term
 continuous time growth  rate
\begin{equation}
 \label{optf-rv1}
g(f^*)=\frac{\mu^2}{2\sigma^2}.
\end{equation}
The NS-NL condition $f^*\in [0,1]$ holds if
$0\leq \mu\leq \sigma^2$, which, to the order $1/n$,    is consistent with
\eqref{global1-l} and \eqref{global1-r}, when applied to
\eqref{return-nk}:
$$
\mathbb{E}[r_{n,k}]=\frac{\mu}{n},\ \
\mathbb{E}\left[\frac{r_{n,k}}{1+r_{n,k}}\right]
=\frac{\mu-\sigma^2}{n}+o(n^{-1}).
$$

The wealth process \eqref{wealth2-cont} is that of someone who is ``continuously" placing bets, that is,   adjusts the positions instantaneously, and, for large $n$,
 is a good approximation of  high frequency betting \eqref{Wntf}.
In  general, when the returns are given by \eqref{return-nk}, a direct optimization of
\eqref{Wntf} with respect to $f$ will not  lead to a closed-form expression for the corresponding
optimal strategy  $f_n^*$ , but
Theorem \ref{prop0} implies that, for sufficiently large $n$,
 \eqref{optf-rv}  is an approximation of  $f_n^*$
  and \eqref{optf-rv1} is an approximation of the corresponding long-term growth rate.
As an illustration, consider the high-frequency version
 of the simple Bernoulli model \eqref{return1}:
\begin{equation}
\label{return1-hf}
\mathbb{P}\left( r_{n,k}=\frac{\mu}{n}\pm \frac{\sigma}{\sqrt{n}}\right)=\frac{1}{2},
\end{equation}
which, for fixed $n$, is
is a particular case of the general Bernoulli model
\eqref{GenBern} with $p=q=1/2$,
$$
a=\frac{\sigma}{\sqrt{n}}-\frac{\mu}{n},\ b=\frac{\sigma}{\sqrt{n}}+\frac{\mu}{n}.
$$
Then, by direct computation,
$$
f_n^*=\frac{\mu}{\sigma^2-(\mu^2/n)}\to \frac{\mu}{\sigma^2},\ n\to \infty,
$$
and
$$
\lim_{n\to \infty} g_{r_n}(f_n^*)=\frac{\mu^2}{2\sigma^2}.
$$
Even though the analysis of the proof of Theorem \ref{prop0}
shows that the convergence is uniform in $f$ on compact sub-sets of $(0,1)$,
the proof that $\lim_{n\to \infty} f^*_n=f^*$ would require
a version of Theorem \ref{prop0} with $T=+\infty$, which, for now, is not
available.

With natural  modifications, Theorem \ref{prop0}  extends to the setting \eqref{expo-model}.

\begin{thm}
\label{prop0-1}
Assume that
\begin{equation*}
%\label{ExpRet-00}
r_{n,k}+1=\exp\left(\frac{b}{n}+\frac{\sigma}{\sqrt{n}}\,\xi_{n,k}\right),
\end{equation*}
where $b \in \mathbb{R}$, $\sigma>0$, and, for each $n\geq 1$, $k\leq n$,
the random variables
$\xi_{n,k}$ are independent and identically distributed, with zero mean, unit variance, and, for every $a>0$,
\begin{equation}
\label{moment1}
 \lim_{n\to\infty}n\,
  \mathbb{E}\big[|\xi_{n,1}|^2I(|\xi_{n,1}|>a\sqrt{n})\big]=0.
\end{equation}
Then the conclusion of Theorem \ref{prop0} holds with
$$
\mu=b+\frac{\sigma^2}{2}.
$$
\end{thm}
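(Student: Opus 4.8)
The plan is to reproduce the proof of Theorem~\ref{prop0} step for step. Put $Y^{n,f}_t=\ln W^{n,f}_t=\sum_{k=1}^{\lfloor nt\rfloor}\ln(1+fr_{n,k})$; this is again a partial sum of row-wise independent and identically distributed increments, so the predictable-characteristics criterion \cite[Theorem VII.3.7]{LimitTheoremsforStochasticProcesses} reduces the convergence in law of $Y^{n,f}$ to the Gaussian process $Y^f_t=\big(f\mu-\frac{f^2\sigma^2}{2}\big)t+f\sigma B_t$ to re-verifying the three conditions \eqref{mean0}, \eqref{var0}, and \eqref{jump0}. The only structural change from Theorem~\ref{prop0} is that the uniform bound \eqref{eq:bnd1} is gone, so the uniform integrability and the negligibility of large increments must be extracted from the Lindeberg-type hypothesis \eqref{moment1} instead. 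The fourth-moment estimate used before can in fact be avoided: once \eqref{var0} is in place, $\mathbb{E}\big(Y^{n,f}_t-\mathbb{E}Y^{n,f}_t\big)^2=\lfloor nt\rfloor\,\mathrm{Var}\big(\ln(1+fr_{n,1})\big)$ stays bounded on $[0,T]$, and boundedness in $L^2$ already yields uniform integrability.

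Writing $\theta_{n,k}=\frac{b}{n}+\frac{\sigma}{\sqrt n}\,\xi_{n,k}$, we have $\ln(1+fr_{n,k})=\psi_f(\theta_{n,k})$ with $\psi_f(z)=\ln\big(1+f(e^{z}-1)\big)$. From $\psi_f(0)=0$, $\psi_f'(0)=f$, and $\psi_f''(0)=f(1-f)$ one obtains
\[
\psi_f(z)=fz+\frac{f(1-f)}{2}\,z^2+O(z^3),\qquad z\to0.
\]
Since $\mathbb{E}[\xi_{n,1}]=0$ and $\mathbb{E}[\xi_{n,1}^2]=1$ give $\mathbb{E}[\theta_{n,1}]=b/n$ and $\mathbb{E}[\theta_{n,1}^2]=\sigma^2/n+b^2/n^2$, the leading terms yield
\[
n\,\mathbb{E}\big[\ln(1+fr_{n,1})\big]\longrightarrow fb+\frac{f(1-f)\sigma^2}{2}=f\Big(b+\frac{\sigma^2}{2}\Big)-\frac{f^2\sigma^2}{2},
\]
which is \eqref{mean0} after using $\lfloor nt\rfloor/n\to t$ uniformly on $[0,T]$, and which identifies $\mu=b+\sigma^2/2$. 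Likewise $\psi_f(z)^2=f^2z^2+O(z^3)$ gives $n\,\mathbb{E}\big[\psi_f(\theta_{n,1})^2\big]\to f^2\sigma^2$, while $\big(\mathbb{E}[\psi_f(\theta_{n,1})]\big)^2=O(n^{-2})$, which is \eqref{var0}.

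The step I expect to be the main obstacle is making these expansions legitimate despite the unboundedness of $\xi_{n,k}$, since the remainder bound $O(z^3)$ holds only on bounded sets. Here I would record the at-most-linear growth $|\psi_f(z)|\le|z|+C_f$, valid for all $z\in\mathbb{R}$ with $C_f<\infty$ for each fixed $f\in[0,1]$ (indeed $\psi_f(z)=z+\ln f+o(1)$ as $z\to+\infty$, $\psi_f(z)\to\ln(1-f)$ as $z\to-\infty$ for $f<1$, and $\psi_1(z)=z$), and split every expectation over $\{|\xi_{n,1}|\le a\sqrt n\}$ and its complement. On the first set $|\theta_{n,1}|=O(a)$, so the cubic remainder is $O(a\,\theta_{n,1}^2)$ and contributes $O(a)$ after multiplication by $\lfloor nt\rfloor$; on the complement, the bounds $|\psi_f(\theta_{n,1})|\le|\theta_{n,1}|+C_f$ and $\psi_f(\theta_{n,1})^2\le2\theta_{n,1}^2+2C_f^2$ turn the tail contributions to $\mathbb{E}[\psi_f(\theta_{n,1})]$ and $\mathbb{E}[\psi_f(\theta_{n,1})^2]$ into a constant multiple of $n^{-1}\mathbb{E}\big[\xi_{n,1}^2\,I(|\xi_{n,1}|>a\sqrt n)\big]$, which is $o(n^{-1})$ by \eqref{moment1}. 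Taking $\limsup_{n\to\infty}$ and then $a\to0$ closes \eqref{mean0} and \eqref{var0}.

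The same splitting settles the jump condition \eqref{jump0}: for $\phi$ bounded and continuous with $\phi(x)=o(x^2)$ as in \eqref{phi0}, the small set contributes a factor tending to $0$ with $a$ times $\lfloor nt\rfloor\,\mathbb{E}[\psi_f(\theta_{n,1})^2]\to f^2\sigma^2t$, whereas the large set is bounded by $\|\phi\|_\infty\,\lfloor nt\rfloor\,\mathbb{P}(|\xi_{n,1}|>a\sqrt n)\le\|\phi\|_\infty\,a^{-2}\,t\,\mathbb{E}\big[\xi_{n,1}^2\,I(|\xi_{n,1}|>a\sqrt n)\big]\to0$ by \eqref{moment1}. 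With \eqref{mean0}--\eqref{jump0} verified, \cite[Theorem VII.3.7]{LimitTheoremsforStochasticProcesses} delivers the convergence in law of $Y^{n,f}$ to $Y^f$, hence of $W^{n,f}$ to $W^f$, exactly as in Theorem~\ref{prop0}.
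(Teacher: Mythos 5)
Your proposal is correct and takes essentially the same route as the paper: the paper's own proof is only a two-sentence sketch asserting that \eqref{moment1} makes it possible to verify \eqref{mean0}--\eqref{jump0}, and your truncation of the expectations at $|\xi_{n,1}|=a\sqrt{n}$ (Taylor expansion of $\ln\big(1+f(e^z-1)\big)$ on the small set, crude growth bounds plus \eqref{moment1} on the tail, then $\limsup_n$ followed by $a\to 0$) is precisely the argument being alluded to. Your replacement of the fourth-moment bound by $L^2$-boundedness for uniform integrability is also the right adjustment, since without \eqref{eq:bnd1} fourth moments of $\xi_{n,1}$ need not exist.
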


\begin{proof}
Even though a formal Taylor expansion suggests
	$$
	r_{n,k}=\frac{\mu}{n} + \frac{\sigma}{\sqrt{n}}\,\xi_{n,k}+o(1/n)
$$
 we cannot apply   Theorem \ref{prop0} directly because the
 random variables $\xi_{n,k}$ are not necessarily uniformly bounded.
 Still, condition \eqref{moment1}
  makes it possible to verify conditions
 \eqref{mean0}--\eqref{jump0}.
\end{proof}

Condition \eqref{moment1} is clearly satisfied when
$\xi_{n,k},\ k=1,2,\ldots,$ are iid standard normal, which corresponds to
\begin{equation}
\label{discr-ret-gauss}
r_{n,k}=\frac{P_{k/n} - P_{(k-1)/n}}{P_{(k-1)/n}}
\end{equation}
and
\begin{equation}
\label{GBM-returns}
P_t=e^{bt+\sigma B_t}.
\end{equation}
Thus, while the exponential model \eqref{expo-model} with log-normal
returns is not solvable in closed form, the high-frequency version leads to the
(approximately) optimal strategy
\begin{equation}
\label{st-line}
f^*=\frac{b}{\sigma^2}+\frac{1}{2},
\end{equation}
and, under \eqref{log-nrmal}, the NS-NL condition holds: $f^*\in (0,1)$.
Numerical experiments  with $\sigma=1$ and $n=10$ show that the
values of the corresponding optimal $f^*_{10}$ are very close to those given
by \eqref{st-line} for all $b\in (-1/2,1/2)$.

Informally, both Theorems \ref{prop0} and \ref{prop0-1} can be considered
as particular cases of the delta method for the Donsker theorem with drift:
if the sequence of processes
$$
t\mapsto \sum_{k=1}^{\lfloor nt \rfloor} \xi_{n,k}
$$
converges, as $n\to \infty$,  to the processes $t\mapsto
bt+\sigma B_t$ and $\varphi=\varphi(x)$ is a suitable
function with $\varphi(0)=0$, then one would expect the  sequence of processes
$$
t\mapsto \sum_{k=1}^{\lfloor nt \rfloor} \varphi\big(\xi_{n,k}\big)
$$
to converge to the process
$$
t\mapsto \left(\varphi'(0)b+\frac{\varphi''(0)\sigma^2}{2}\right)t+|\varphi'(0)|\sigma B_t.
$$

\subsection{Beyond the Log-Normal Limit}
With the results of  Section \ref{sec:CC} in mind,
 we consider the following generalization of \eqref{discr-ret-gauss}:
 \eqref{GBM-returns}:
$$
r_{n,k} = \frac{P_{k/n} - P_{(k-1)/n}}{P_{(k-1)/n}}, \ k=1,2,\ldots,
$$
 where the process $P=P_t,\ t\geq 0$, has the form $P_t=e^{R_t}$, and
$R=R_t$ is a L\'evy process.
In other words,
\begin{equation}
\label{dctr-gen}
r_{n,k}={e}^{R_{k/n}-R_{(k-1)/n}} - 1.
\end{equation}

As in \eqref{Return-Levy0}, the  process $R=R_t$   can be
decomposed into a drift,  diffusion/small jump,
and large jump components according to the L\'{e}vy-It\^{o}
decomposition \cite[Theorem 19.2]{Sato}:
\begin{equation}
\label{Levy-main}
R_t={\mu}t+\sigma\,B_t+
\int_0^t\zint_{-1}^1
x\big(\mu^R(dx,ds)-F^R(dx)ds\big)+
\int_0^t\int_{|x|>1}
x\mu^R(dx,ds);
\end{equation}
we continue to use the notation $-\!\!\!\!\!\int$ first introduced in \eqref{zint}.

Now that the process $R_t$ is exponentiated,
\begin{itemize}
	\item there is no need to assume that $\triangle R_t\geq -1$ ;
	\item the analog of \eqref{LogVar-Levy} becomes
	$\mathbb{E}|R_1|<\infty$.
\end{itemize}

Equality \eqref{Levy-main}  has a natural interpretation
 in terms of financial risks \cite{Thorp03}: the drift represents  the
   edge (``guaranteed'' return), diffusion and small jumps represent
   small  fluctuations of returns, and
    the large jump component represents  (sudden) large  changes in returns.
Similar to \eqref{Wntf}, the corresponding wealth process is
\begin{equation}
\label{wp-100}
W_{t}^{n,f} =  \prod_{k = 1}^{\lfloor nt \rfloor} \big(1 + f r_{n,k}\big).
\end{equation}

We have the following generalization of Theorem \ref{prop0-1}.
\begin{thm}
Consider the family of processes $W^{n,f}=W_t^{n,f}$, $t\in [0,T]$, $n\geq 1$,
$f\in [0,1],$ defined by \eqref{wp-100}. If
 $r_{n,k}$ is given by \eqref{dctr-gen}, with $P_t=e^{R_t}$, and
$R=R_t$ is a L\'{e}vy process with representation \eqref{Levy-main} and $\mathbb{E}|R_1|< \infty$,
  then, for  every $f\in [0,1]$ and $T>0$,
$$
\lim_{n\to \infty}W^{n,f} \wcL W^f
$$
 in $\mathbb{D}((0,T))$,  where
 \begin{equation}
 \label{WP-generalLP}
 \begin{split}
W^f_t &=\exp\left(f R_t + \frac{ f(1-f)\sigma^2}{2}\, t\right.\\
 &+ \left. \int_0^t \zint_{\mathbb{R}} \Big[\ln\big(1+f(e^x - 1)\big) - f x\Big] \mu^R(dx, ds)\right).
 \end{split}
\end{equation}
\end{thm}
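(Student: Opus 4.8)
The plan is to pass to logarithms, prove a functional limit theorem for $Y^{n,f}_t:=\ln W^{n,f}_t$, and recover the statement by continuity of the exponential map. Since the increments of a L\'evy process over disjoint intervals of equal length are independent and identically distributed, writing $\Delta^n_kR:=R_{k/n}-R_{(k-1)/n}$ and
$$
h(x):=\ln\bigl(1+f(e^x-1)\bigr),
$$
we have $Y^{n,f}_t=\sum_{k=1}^{\lfloor nt\rfloor}h(\Delta^n_kR)$, a sum of row-wise iid summands distributed as $h(R_{1/n})$. A direct computation gives $h(0)=0$, $h'(0)=f$, $h''(0)=f(1-f)$, so $h(x)=fx+\tfrac{f(1-f)}{2}x^2+O(|x|^3)$ near the origin, while $|h(x)|\le|x|+C_f$ globally; hence $h(x)-fx=O(x^2)$ near $0$ and $O(|x|)$ at infinity, and $\zint_{-1}^{1}x^2F^R(dx)<\infty$ together with $\mathbb E|R_1|<\infty$ force $\zint_{\mathbb R}|h(x)-fx|\,F^R(dx)<\infty$. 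Thus the integral in \eqref{WP-generalLP} converges and $Y^f_t:=\ln W^f_t$ is a well-defined L\'evy process.

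Comparing \eqref{WP-generalLP} with \eqref{Levy-main}, one sees that $Y^f$ has jumps $\Delta Y^f_s=h(\Delta R_s)$ and continuous Gaussian part $f\sigma B_t$, so that, with respect to the truncation $\tau(y)=y\,I(|y|\le1)$, its characteristic triplet is $(b,\ f^2\sigma^2,\ h_*F^R)$, where $h_*F^R$ is the push-forward of $F^R$ by $h$ and
$$
b=f\mu+\frac{f(1-f)\sigma^2}{2}+\zint_{\mathbb R}\bigl(\tau(h(x))-f\,\tau(x)\bigr)F^R(dx);
$$
the integrand is bounded, supported on $\{|x|\le1\}$ together with the finite-mass region where $|h(x)|\le1$, and is $O(x^2)$ as $x\to0$, so $b$ is finite. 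The goal is to show that $Y^{n,f}$ converges in law to $Y^f$ in the Skorokhod space; the map $x\mapsto e^{x}$ is continuous from $\mathbb D$ to $\mathbb D$ in the $J_1$ topology, so the continuous mapping theorem then gives $W^{n,f}=\exp(Y^{n,f})\to\exp(Y^f)=W^f$ in law.

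For $Y^{n,f}\to Y^f$ I would follow the template of the proof of Theorem \ref{prop0}, applying the limit theorem for row-iid triangular arrays, \cite[Theorem~VII.3.7]{LimitTheoremsforStochasticProcesses}, which underlies Corollary~VII.3.11. Because $\zeta_n:=h(R_{1/n})$ is now unbounded, the conditions \eqref{mean0}--\eqref{jump0} must be taken in truncated form: it suffices to establish
$$
\sup_{t\le T}\bigl|\lfloor nt\rfloor\,\mathbb E[\tau(\zeta_n)]-b\,t\bigr|\to0,
$$
$$
\lfloor nt\rfloor\Bigl(\mathbb E[\tau(\zeta_n)^2]-\bigl(\mathbb E[\tau(\zeta_n)]\bigr)^2\Bigr)\to f^2\sigma^2t+t\zint_{\mathbb R}\tau(h(x))^2F^R(dx),
$$
and $\lfloor nt\rfloor\,\mathbb E[g(\zeta_n)]\to t\zint_{\mathbb R}g(h(x))F^R(dx)$ for every bounded continuous $g$ vanishing near $0$, together with the asymptotic negligibility $\sup_{k}\mathbb P(|h(\Delta^n_kR)|>\varepsilon)\to0$, which is immediate from $R_{1/n}\to0$ in probability and $h(0)=0$. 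The single analytic input is the short-time behaviour of the L\'evy process, namely the vague convergence $n\,\mathbb E[\psi(R_{1/n})]\to\zint_{\mathbb R}\psi\,dF^R$ for bounded continuous $\psi$ vanishing near $0$ (cf. \cite{Sato}), supplemented by the diffusive estimates $n\,\mathbb E[R_{1/n}\,I(|R_{1/n}|\le\varepsilon)]\to\mu-\zint_{\varepsilon<|x|\le1}xF^R(dx)$ and $n\,\mathbb E[R_{1/n}^2\,I(|R_{1/n}|\le\varepsilon)]\to\sigma^2+\zint_{-\varepsilon}^{\varepsilon}x^2F^R(dx)$.

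The jump condition follows by applying the vague convergence to $\psi=g\circ h$, which is bounded, continuous, and vanishes near $0$. The variance condition is controlled by small increments, where $h(x)^2=f^2x^2+O(|x|^3)$: the diffusive part yields $f^2\sigma^2t$ and the small-jump mass is recovered as $t\zint\tau(h(x))^2F^R(dx)$ in the limit. \emph{The crux is the drift condition.} Splitting according to $\{|R_{1/n}|\le\varepsilon\}$ and its complement, expanding $\tau(\zeta_n)=fR_{1/n}+\tfrac{f(1-f)}{2}R_{1/n}^2+O(|R_{1/n}|^3)$ on the first set, and using the short-time estimates gives, for fixed $\varepsilon$,
$$
n\,\mathbb E[\tau(\zeta_n)]-b=\zint_{|x|\le\varepsilon}\Bigl[\tau(h(x))-fx-\tfrac{f(1-f)}{2}x^2\Bigr]F^R(dx)+\rho_n(\varepsilon)+O(\varepsilon),
$$
where $\lim_n\rho_n(\varepsilon)=0$ and the bracketed integrand is $O(|x|^3)$, so the integral is $O(\varepsilon)$; letting $n\to\infty$ and then $\varepsilon\downarrow0$ yields the limit $b$, uniformly in $t\le T$. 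The difficulty is exactly this double limit: the first-order pieces $f\zint_{|x|\le\varepsilon}xF^R(dx)$ (from the diffusive expansion) and $\zint_{|x|\le\varepsilon}\tau(h(x))F^R(dx)$ (from the jump compensator) may each diverge as $\varepsilon\downarrow0$, since only $\zint x^2F^R<\infty$ is guaranteed, and a finite limit emerges only after they are combined together with the second-order term to leave the convergent $O(|x|^3)$ integral above. Dominating the Taylor remainder against the possibly infinite small-jump intensity uniformly in $t$ is the main technical burden; the hypothesis $\mathbb E|R_1|<\infty$ guarantees that every large-jump contribution, and hence the limiting drift $b$ and the process $Y^f$ itself, is finite.
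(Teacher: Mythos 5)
Your proposal is correct, but it takes a genuinely different route from the paper's proof. You argue distributionally: $\ln W^{n,f}_t=\sum_{k\le\lfloor nt\rfloor}h\big(R_{k/n}-R_{(k-1)/n}\big)$ with $h(x)=\ln\big(1+f(e^x-1)\big)$ is a row-wise iid triangular array, you compute the characteristic triplet $\big(b,\,f^2\sigma^2,\,h_*F^R\big)$ of $\ln W^f$ from \eqref{WP-generalLP} (your formula for $b$ is the correct one, as one checks by rewriting \eqref{WP-generalLP} in canonical form relative to the truncation $\tau$), verify the truncated analogues of \eqref{mean0}--\eqref{jump0} demanded by \cite[Theorem VII.3.7]{LimitTheoremsforStochasticProcesses} using the short-time asymptotics $n\,\mathbb{E}[\psi(R_{1/n})]\to\zint_{\mathbb{R}}\psi\,dF^R$ together with the truncated first- and second-moment limits, and finish with continuity of $\exp$ on $\mathbb{D}$. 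The paper instead works pathwise on the probability space carrying $R$: It\^o's formula applied to $s\mapsto\ln\big(1+fr^{n,k}_s\big)$ on each interval $\big(\frac{k-1}{n},\frac{k}{n}\big]$ (see \eqref{rnks}) turns $\ln W^{n,f}_t$ into the three integrals \eqref{main-integrals} driven by $dR_s$, $ds$, and $\mu^R(dx,ds)$; the integrands converge pointwise to $f$, $\frac{\sigma^2 f(1-f)}{2}$, and $h(x)-fx$, satisfy the uniform bounds \eqref{ubound1-2} and \eqref{ubound3}, and the dominated convergence theorem for stochastic integrals \cite[Theorem IV.32]{Protter}, followed by \cite[Section IX.5.12]{LimitTheoremsforStochasticProcesses}, gives the limit. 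The trade-offs are instructive. The paper's coupling yields convergence in probability in $\mathbb{D}$ --- stronger than the convergence in law that is claimed, and more than any purely distributional method can deliver --- and the cancellation between $h(x)$ and the compensating $fx$ term, exactly the crux you flag (since only $\zint_{-1}^{1}x^2F^R(dx)<\infty$ is guaranteed, the first-order pieces can diverge individually), is absorbed automatically into the It\^o remainder via the single bound $\big|H^{(3)}_{n,t}(s,x)\big|\le 2\big(|x|\wedge|x|^2\big)$, with no double limit $n\to\infty$ then $\varepsilon\downarrow 0$. Your route, conversely, uses only the law of the increments, so it applies verbatim when the $r_{n,k}$ are merely iid copies of $e^{R_{1/n}}-1$ rather than the pathwise functionals \eqref{dctr-gen}, and it makes the limit's L\'evy triplet explicit, which the paper's proof never computes. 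Two small technical points to tidy up if you write it out in full: run the truncated-moment limits only along levels $\varepsilon$ with $F^R(\{|x|=\varepsilon\})=0$, and note that the asymptotic negligibility you observe (from $R_{1/n}\to 0$ and $h(0)=0$) is what places the iid array within the scope of \cite[Corollary VII.3.11]{LimitTheoremsforStochasticProcesses}.
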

\begin{proof}
By \eqref{dctr-gen} and \eqref{wp-100},
\begin{equation*}
\ln W_t^{n,f} =  \sum_{k=1}^{\lfloor nt \rfloor}\ln\bigg( 1 + f \big( {e}^{R_{k/n}-R_{(k-1)/n}} - 1\big)\bigg).
\end{equation*}
\underline{Step 1:}
For $s \in \big(\frac{k-1}{n}, \frac{k}{n}\big]$, let
\begin{equation}
\label{rnks}
r_s^{n,k} = {e}^{R_s - R_{(k-1)/n}} - 1,
\end{equation}
 and apply the  It\^o's formula  \cite[Theorem II.32]{Protter}
 to the process
 $$
 s\mapsto \ln\big(1+f r_s^{n,k}\big),\ \ s \in \Bigg(\frac{k-1}{n}, \frac{k}{n}\Bigg].
 $$
 The result is
\begin{equation*}
\begin{split}
  \ln\big(1+f r_s^{n,k}\big)
&= \int_{\frac{k-1}{n}}^s \frac{f (1 + r_{u-}^{n,k})}{1+f r_{u-}^{n,k}}\,
 dR_u
+ \frac{\sigma^2}{2}\int_{\frac{k-1}{n}}^s \frac{f(1-f)(1+r_{u-}^{n,k})}{\big(1+fr_{u-}^{n,k}\big)^2}\, du \\
& + \int_{\frac{k-1}{n}}^s \zint_{\mathbb{R}}
 \bigg[\ln\big(1-f+f{e}^{x} (r_{u-}^{n,k}+ 1)\big) \\
& - \ln(1+fr_{u-}^{n,k})  - x \frac{f(1+r_{u-}^{n,k})}{1+f r_{u-}^{n,k}}\bigg]\mu^R(dx, du).
\end{split}
\end{equation*}

\underline{Step 2:}
Putting  $s = \frac{k}{n}$ in the above equality and summing over $k$,
we derive the following expression for $ \ln W_t^{n,f}$:
\begin{align}
\notag
\ln W_t^{n,f} &= \sum_{k=1}^{\lfloor nt \rfloor} \bigg(\int_{\frac{k-1}{n}}^{\frac{k}{n}} h^{(1)}_{n,k}(s) \, dR_s
 + \int_{\frac{k-1}{n}}^{\frac{k}{n}}
 h^{(2)}_{n,k}(s) \, ds
 + \int_{\frac{k-1}{n}}^{\frac{k}{n}} \zint_{\mathbb{R}} h^{(3)}_{n,k}(s,x)  \mu^R(dx, du)\bigg)\\
 \label{main-integrals}
&=  \int_0^t H^{(1)}_{n,t}(s) \, dR_s + \int_0^t H^{(2)}_{n,t}(s) \, ds  + \int_0^t\zint_{\mathbb{R}} H^{(3)}_{n,t}(s,x) \mu^R(dx, ds)\,,
\end{align}
where
\begin{equation*}
\begin{split}
\displaystyle
h^{(1)}_{n,k}(s) & = \frac{f(1+r_{s-}^{n,k})}{1+fr_{s-}^{n,k}}\,,\ \
h^{(2)}_{n,k}(s) =
 \frac{\sigma f(1-f)}{2}\frac{1+r_{s-}^{n,k}}{(1+fr_{s-}^{n,k})^2}\,, \\
h^{(3)}_{n,k}(s,x)&= \ln\big(1-f+fe^x(r_{s-}^{n,k}+1) \big) - \ln(1+fr_{s-}^{n,k}) - fx\frac{1+r_{s-}^{n,k}}{1+fr_{s-}^{n,k}}\,;\\
H^{(i)}_{n,t}(s) &= \sum_{k=1}^{\lfloor nt \rfloor} h^{(i)}_{n,k}(s) \mathbf{1}_{(\frac{k-1}{n}, \frac{k}{n}]}(s), \  i = 1, 2;\  \
H^{(3)}_{n,t}(s,x)  = \sum_{k=1}^{\lfloor nt \rfloor} h^{(3)}_{n,k}(s,x)\mathbf{1}_{(\frac{k-1}{n}, \frac{k}{n}]}(s).
\end{split}
\end{equation*}
\underline{Step 3:}
Because
$$
\lim_{n\to \infty,\, k/n\to s} R_{(k-1)/n}=R_{s-},
$$
equality \eqref{rnks} implies
$$
\lim_{n\to +\infty,\, k/n\to s} r^{n,k}_{s-}= 0
$$
  for all $s$.  Consequently, we have the following
convergence in probability:
\begin{align*}
\lim_{n\to +\infty} H^{(1)}_{n,t}(s)=f,\ &
\lim_{n\to +\infty} H^{(2)}_{n,t}(s)=\frac{\sigma^2 f(1-f)}{2},\\
&\lim_{n\to +\infty} H^{(2)}_{n,t}(s,x)=\ln\big(1+f(e^x-1)\big)-fx.
\end{align*}

To pass to the corresponding limits in \eqref{main-integrals}, we need suitable bounds on the functions $H^{(i)}$, $i=1,2,3$.

Using the inequalities
$$
0<\frac{1+y}{1+ay}\leq \frac{1}{a},\ \ 0<\frac{1+y}{(1+ay)^2}
\leq \frac{1}{4a(1-a)}, \ \ \ y>-1,\ a\in (0,1),
$$
we conclude that
$$
0<h^{(1)}_{n,k}(s)\leq 1,\ 0<h^{(2)}_{n,k}(s)\leq \sigma^2,
$$
and therefore
\begin{equation}
\label{ubound1-2}
0<H^{(1)}_{n,t}(s)\leq 1,\ 0<H^{(2)}_{n,t}(s)\leq \sigma^2.
\end{equation}
Similarly, for $f\in (0,1)$ and $y > -1$,
\begin{equation}
\label{H3bnd00}
\left|\ln \frac{1-f+fe^x(y+1)}{1+fy}-fx\frac{1+y}{1+fy}\right|\leq 2\big(|x|\wedge |x|^2\big),
\end{equation}
so that
$$
|h^{(3)}_{n,k}(s,x)|\leq 2\big(|x|\wedge |x|^2\big)
$$
and
\begin{equation}
\label{ubound3}
\big|H^{(3)}_{n,t}(s)\big| \leq 2\big(|x|\wedge |x|^2\big).
\end{equation}
To verify \eqref{H3bnd00},
 fix $f\in (0,1)$ and $y>-1$, and define the function
 $$
 z(x) = \ln \frac{1-f+fe^x(y+1)}{1+fy},\ x\in \mathbb{R}.
 $$
By direct computation,
\begin{equation*}
\begin{split}
    z(0) &= 0, \\
    z'(x) &= \frac{fe^x(y+1)}{1-f + fe^x(y+1)}=1-\frac{1-f}{1-f + fe^x(y+1)},\\
    z'(0) &= \frac{f(y+1)}{1+fy},
    \end{split}
    \end{equation*}
    so that, using the Taylor formula,
    \begin{equation}
    \label{H3bnd}
  \ln \frac{1-f+fe^x(y+1)}{1+fy}-fx\frac{1+y}{1+fy}
  =z(x)-z(0)-xz'(0)=\int_0^x(x-u)z''(u)du.
  \end{equation}
  It remains to notice that
  $$
    0\leq z'(x)\leq  1, \ \ 0\leq z''(x)\leq 1,
    $$
 and then  \eqref{H3bnd00} follows from  \eqref{H3bnd}.

With \eqref{ubound1-2} and \eqref{ubound3} in mind,  the dominated convergence theorem
 \cite[Theorem IV.32]{Protter} makes it possible to pass to the limit
 in probability in \eqref{main-integrals}; the convergence in the
 space $\mathbb{D}$ then follows from the general
results of \cite[Section IX.5.12]{LimitTheoremsforStochasticProcesses}.
\end{proof}

The following is a representation of the
long-term growth rate of the limiting wealth process $W^f$.

\begin{thm}
\label{th:gr-rate-LP-gen}
Let  $R=R_t$ be a L\'{e}vy process with representation \eqref{Levy-main}.
If $\mathbb{E}|R_1| < \infty$, then the process $W^f=W^f_t$
defined in \eqref{WP-generalLP}
satisfies
\begin{equation}
\label{GR-gen-LP}
\begin{split}
\lim_{t\to +\infty} \frac{\ln W_t^f}{t}&=f \bigg(\mu + \int_{|x|>1} x F^R(dx)\bigg)
+ \frac{ f(1-f) \sigma^2}{2}\\
 &+ \int_{\mathbb{R}}\big[\ln\big(1 + f ({e}^x - 1) \big) - f x \big] F^R(dx).
 \end{split}
\end{equation}
\end{thm}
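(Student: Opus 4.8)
The plan is to decompose $\ln W^f_t$ into L\'evy pieces and apply to each the strong law of large numbers for L\'evy processes: if $X=X_t$ is a L\'evy process with $\mathbb{E}|X_1|<\infty$, then $X_t/t\to\mathbb{E}[X_1]$ with probability one. By \eqref{WP-generalLP},
\begin{equation*}
\ln W^f_t = fR_t + \frac{f(1-f)\sigma^2}{2}\,t + J_t,\qquad
J_t:=\int_0^t\zint_{\mathbb{R}}\psi(x)\,\mu^R(dx,ds),
\end{equation*}
where $\psi(x)=\ln\big(1+f(e^x-1)\big)-fx$. The middle term is linear in $t$ and contributes $f(1-f)\sigma^2/2$ to the limit at once, so it remains to compute the almost-sure limits of $fR_t/t$ and $J_t/t$.

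For the first term, $R$ is by hypothesis a L\'evy process with $\mathbb{E}|R_1|<\infty$, so the strong law gives $R_t/t\to\mathbb{E}[R_1]$. Reading the mean off \eqref{Levy-main}, the Brownian term and the compensated small-jump integral are centered, while the large-jump integral has mean $t\int_{|x|>1}xF^R(dx)$; hence $\mathbb{E}[R_1]=\mu+\int_{|x|>1}xF^R(dx)$, so that $fR_t/t$ converges to $f\big(\mu+\int_{|x|>1}xF^R(dx)\big)$. Combined with the linear term, this accounts for the first line of \eqref{GR-gen-LP}.

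For the second term, I would note that $J=J_t$ is itself a L\'evy process, being the deterministic functional $\psi$ integrated against the Poisson random measure $\mu^R$ of $R$, whose intensity is $F^R(dx)\,ds$; its jumps are $\psi(\triangle R_s)$. Since $1+f(e^x-1)=(1-f)+fe^x$, we have $\psi(x)=\ln\big((1-f)e^{-fx}+fe^{(1-f)x}\big)$, and convexity of the exponential gives $(1-f)e^{-fx}+fe^{(1-f)x}\ge 1$, so $\psi\ge 0$; thus $J$ is in fact a subordinator with $\mathbb{E}[J_1]=\int_{\mathbb{R}}\psi(x)\,F^R(dx)$, as soon as this integral is finite. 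The strong law then yields $J_t/t\to\int_{\mathbb{R}}\psi\,dF^R$, and summing the three limits produces \eqref{GR-gen-LP}.

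The main obstacle is the integrability of $\psi$ against $F^R$, which is precisely where $\mathbb{E}|R_1|<\infty$ enters. A Taylor expansion gives $\psi(x)=\tfrac{f(1-f)}{2}x^2+O(x^3)$ as $x\to0$, so $\psi(x)=O(x^2)$ there and the contribution of small jumps is controlled by $\int_{|x|\le1}\min(1,x^2)\,F^R(dx)$, finite for every L\'evy measure. For large $|x|$ one finds $\psi(x)=(1-f)x+\ln f+o(1)$ as $x\to+\infty$ and $\psi(x)=-fx+\ln(1-f)+o(1)$ as $x\to-\infty$, so $\psi(x)=O(|x|)$; consequently $\int_{|x|>1}\psi\,dF^R<\infty$ is equivalent to $\int_{|x|>1}|x|\,F^R(dx)<\infty$, which for a L\'evy process is exactly the condition $\mathbb{E}|R_1|<\infty$. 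This single moment hypothesis therefore makes the strong law applicable to both $R$ and $J$ and renders the limiting integral in \eqref{GR-gen-LP} absolutely convergent.
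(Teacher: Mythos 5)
Your proof is correct and takes essentially the same route as the paper: decompose $\ln W^f_t$ into the deterministic linear term, $fR_t$, and the jump integral, and apply the strong law of large numbers for L\'evy processes \cite[Theorem 36.5]{Sato} to each piece. The paper's proof is exactly this two-line argument, so your additional verifications---that the jump integral is itself a driftless subordinator with L\'evy measure the image of $F^R$ under $\psi$, and that $\mathbb{E}|R_1|<\infty$ is precisely what makes $\int_{\mathbb{R}}\psi\,dF^R$ absolutely convergent---only fill in details the paper leaves implicit.
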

\begin{proof}
By \eqref{WP-generalLP},
$$
\frac{\ln W_t^f}{t}=
f \frac{R_t}{t} + \frac{ f(1-f)\sigma^2}{2}+ \frac{1}{t}\int_0^t \int_{\mathbb{R}} \Big[\ln\big(1+f(e^x - 1)\big) - f x\Big] \mu^R(dx, ds).
$$
It remains to apply the law of large numbers
 for L\'evy processes \cite[Theorem 36.5]{Sato}.
\end{proof}

If, in addition, we assume that
$$
\zint_{-1}^1 |x|F^R(dx)<\infty,
$$
that is, the small-jump component of $R$ has bounded variation, then, after a change of
variables and  re-arrangement of terms, \eqref{GR-gen-LP} becomes  \eqref{gr-ctL}.
On the other hand, equality \eqref{gr-ctL} is derived for a wider class of return processes
that includes L\'{e}vy processes as a particular case.

Similar to Proposition \ref{prop:glob1},  we also have the following result.
\begin{thm}
\label{th-LevyRate1}
In the setting of Theorem \ref{th:gr-rate-LP-gen}, denote the
right-hand side of \eqref{GR-gen-LP} by $g_R(f)$ and assume that
\begin{align*}
%\label{global-ct-l1}
&\lim_{f\to 0+}
\int_{\mathbb{R}}
\left(\frac{e^x - 1}{1+f(e^x - 1)} - x\right)\, F^R(dx)>-\bigg(\mu
+ \frac{\sigma^2}{2} + \int_{|x|>1} x F^R(dx)\bigg),\\
%\label{global-ct-r1}
&\lim_{f\to 1-}
\int_{\mathbb{R}}
\left(\frac{e^x - 1}{1+f(e^x - 1)} - x\right)\, F^R(dx)<
-\bigg(\mu +  \int_{|x|>1} x F^R(dx)\bigg),
\end{align*}
Then  there exists
 a unique $f^*\in (0,1)$ such that
$$
g_R(f)< g_R(f^*)
$$
 for all $f$ in the domain of $g_R$.
\end{thm}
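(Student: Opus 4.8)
The plan is to follow the template of the proof of Proposition \ref{prop:glob1}: I would show that $g_R$ is strictly concave, check that the two hypotheses force $g_R'$ to change sign inside $(0,1)$, and then combine the intermediate value theorem with strict concavity to locate and characterize the maximizer.

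First I would differentiate $g_R$ under the integral sign. The integrand $\ln(1+f(e^x-1))-fx$, together with its $f$-derivatives, vanishes to second order at $x=0$ and its derivative grows at most linearly in $|x|$ as $|x|\to\infty$; compare the bound \eqref{H3bnd00} established in the previous proof. Hence the integrability inherent in a Lévy measure, $\int_{\mathbb{R}}\min(1,x^2)\,F^R(dx)<\infty$, together with the standing assumption $\mathbb{E}|R_1|<\infty$ (equivalently $\int_{|x|>1}|x|\,F^R(dx)<\infty$), justifies differentiating under the integral by dominated convergence, with domination uniform on compact $f$-intervals interior to the domain. This yields
\begin{equation*}
g_R'(f)=\mu+\int_{|x|>1}xF^R(dx)+\frac{\sigma^2}{2}(1-2f)+\int_{\mathbb{R}}\left(\frac{e^x-1}{1+f(e^x-1)}-x\right)F^R(dx)
\end{equation*}
and
\begin{equation*}
g_R''(f)=-\sigma^2-\int_{\mathbb{R}}\frac{(e^x-1)^2}{\big(1+f(e^x-1)\big)^2}\,F^R(dx)<0,
\end{equation*}
so $g_R$ is strictly concave on its domain, which is an interval containing $[0,1]$ because $(1-f)+fe^x>0$ for every $f\in[0,1]$. (The hypotheses rule out the degenerate case $\sigma=0$, $F^R\equiv0$, in which $g_R$ would be linear.)

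Next I would rephrase the two hypotheses in terms of $g_R'$. Adding $\mu+\tfrac{\sigma^2}{2}+\int_{|x|>1}xF^R(dx)$ to both sides of the first inequality shows it is equivalent to $\lim_{f\to0+}g_R'(f)>0$; likewise, since $\tfrac{\sigma^2}{2}(1-2f)\to-\tfrac{\sigma^2}{2}$ as $f\to1-$, the second inequality is equivalent to $\lim_{f\to1-}g_R'(f)<0$. Because $g_R'$ is continuous and strictly decreasing on $(0,1)$ (as $g_R''<0$) and changes sign there, the intermediate value theorem produces a unique $f^*\in(0,1)$ with $g_R'(f^*)=0$. Strict concavity then promotes this stationary point to the unique global maximizer over the entire domain of $g_R$, giving $g_R(f)<g_R(f^*)$ for all other $f$ in the domain of $g_R$.

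The main obstacle is the justification of differentiation under the integral sign: one must verify that the singularity of $F^R$ at the origin is absorbed by the second-order vanishing of the integrand near $x=0$, and that the large-jump contribution is controlled by the assumption $\mathbb{E}|R_1|<\infty$. Once these integrability and domination estimates are in place, the remainder is exactly the intermediate-value-plus-concavity argument already used for Proposition \ref{prop:glob1}.
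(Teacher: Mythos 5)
Your proposal is correct and takes essentially the same approach the paper intends: the paper gives no separate proof of Theorem \ref{th-LevyRate1}, saying only that it is ``similar to Proposition \ref{prop:glob1}'', which is precisely your argument of computing $g_R'$ and $g_R''$, converting the hypotheses into a sign change of $g_R'$ on $(0,1)$, and combining the intermediate value theorem with strict concavity (your justification of differentiation under the integral and your observation that the hypotheses exclude the degenerate linear case are details the paper leaves implicit). One minor imprecision: the second hypothesis is not exactly equivalent to $\lim_{f\to 1-}g_R'(f)<0$; it yields the stronger bound $\lim_{f\to 1-}g_R'(f)<-\sigma^2/2$, which is all the argument needs.
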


\section{Continuous Limit of  Random Discrete Compounding}

The objective of this section is to analyze high frequency limits for betting {\em in business time}. In other words, the number of bets is not known a priori, so that a natural model of the
corresponding  wealth process is
\begin{equation}
\label{weath-mgen}
W_{t}^{n,f}=\prod_{k=1}^{\lfloor \Lambda_{n,t}\rfloor}
(1+fr_{n,k})
\end{equation}
where, for each $n$, the process $t\mapsto \Lambda_{n,t}$ is
a subordinator, that is,
a non-decreasing L\'{e}vy process, independent of all $r_{n,k}$.

To study \eqref{weath-mgen}, we will follow the methodology in
\cite{KZZ}, where convergence of processes is derived after {\em assuming}
 a suitable convergence of the random variables.
 The main result in this connection is as follows.

 \begin{thm}
 \label{th:Levy-gen0}
 Consider the following objects:
 \begin{itemize}
 \item random variables $X_{n,k},\ n,k\geq 1$ such that
 $\{X_{n,k},\ k\geq 1\}$ are iid for each $n$,
  with mean zero and, for some $\beta\in [0,1]$,  $m_n:=\Big(\mathbb{E}|X_{n,1}|^{\beta}\Big)^{1/\beta}<\infty$;
 \item random processes $\Lambda_n=\Lambda_{n,t}$, $n\geq 1,\ t\geq 0,$
 such that, for each $n$, $\Lambda_n$ is a subordinator
 independent of $\{X_{n,k},\ k\geq 1\}$ with the properties
  $\Lambda_{n,0}=0$, and
 for some numbers $0<\delta,\delta_1\leq 1$ and $C_n>0$,
 $\Big(\mathbb{E}\Lambda_{n,t}^{\delta}\Big)^{1/\delta}
 \leq C_n t^{\delta_1/\delta}$.
 \end{itemize}
 Assume that there exist infinitely divisible random variables $Y$ and $U$
 such that
 $$
 \lim_{n\to \infty} \sum_{k=1}^n X_{n,k} \wc \bar{Y},\
 \lim_{n\to \infty} \frac{\Lambda_{n,1}}{n} \wc \bar{U}.
 $$

 If
 \begin{equation}
 \label{dens}
 \sup_{n} \Big(C_n m_n^{\beta}\Big)<\infty,
 \end{equation}
 then, as $n\to \infty$,  the sequence of processes
 $$
 t\mapsto \sum_{k=1}^{\lfloor \Lambda_{n,t}\rfloor} X_{n,k},\ t\in [0,T],
 $$
 converges, in the Skorokhod topology, to the process $Z=Z_t$
 such that  $Z_t=Y_{U_t}$, where $Y$ and $U$ are independent
L\'{e}vy processes satisfying $Y_1\wc\bar{Y}$ and $U_1\wc\bar{U}$.
\end{thm}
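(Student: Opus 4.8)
The plan is to establish convergence of the finite-dimensional distributions together with tightness in the Skorokhod space $\mathbb{D}([0,T])$, which together yield the asserted functional convergence. Write $Z_{n,t}=\sum_{k=1}^{\lfloor\Lambda_{n,t}\rfloor}X_{n,k}$ and $\phi_n(u)=\mathbb{E}e^{iuX_{n,1}}$, and let $\psi_Y$ be the characteristic (L\'evy) exponent of $\bar Y$. The first step is to convert the two assumed scalar limits into statements about exponents. Since the rows $\{X_{n,k}\}_k$ are i.i.d., the relation $\sum_{k=1}^n X_{n,k}\to\bar Y$ means $\phi_n(u)^n\to e^{\psi_Y(u)}$; as the limit is a nonvanishing continuous function equal to $1$ at $u=0$, one gets the asymptotic negligibility $\phi_n(u)\to1$ uniformly on compacts, and hence $n\log\phi_n(u)\to\psi_Y(u)$. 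Likewise, because each $\Lambda_n$ is itself a L\'evy process, $\Lambda_{n,1}/n\to\bar U$ upgrades to convergence in distribution of $\Lambda_{n,t}/n$ to $U_t$ for every $t$, where $U$ is the subordinator with $U_1\wc\bar U$.

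For the finite-dimensional distributions I would condition on the independent subordinator. For a single time $t$, independence and the i.i.d.\ row structure give $\mathbb{E}[e^{iuZ_{n,t}}\mid\Lambda_n]=\phi_n(u)^{\lfloor\Lambda_{n,t}\rfloor}=\exp(\lfloor\Lambda_{n,t}\rfloor\log\phi_n(u))$. Writing $\lfloor\Lambda_{n,t}\rfloor\log\phi_n(u)=(\lfloor\Lambda_{n,t}\rfloor/n)(n\log\phi_n(u))$ and combining $n\log\phi_n(u)\to\psi_Y(u)$ with the convergence of $\lfloor\Lambda_{n,t}\rfloor/n$ to $U_t$ (Slutsky), the exponent converges in distribution to $U_t\psi_Y(u)$; since $|\phi_n(u)|\le1$ the exponentials are bounded, so $\mathbb{E}e^{iuZ_{n,t}}\to\mathbb{E}e^{U_t\psi_Y(u)}=\mathbb{E}e^{iuY_{U_t}}$, the last equality being the subordination identity for the independent pair $(Y,U)$. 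Applying the same conditioning to disjoint increment blocks, and now using that $\Lambda$ has independent stationary increments, factorizes the joint characteristic function and identifies the limit with the finite-dimensional distributions of the subordinated L\'evy process $Z_t=Y_{U_t}$; cf.\ \cite[Theorem 30.1]{Sato}.

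Tightness is where condition \eqref{dens} enters, and it is the main obstacle. The elementary input is that for $\beta\in(0,1]$ one has $\mathbb{E}|\sum_k X_{n,k}|^\beta\le\sum_k\mathbb{E}|X_{n,k}|^\beta$, so, conditioning on $\Lambda_n$ and using Jensen for the concave map $x\mapsto x\wedge1$, for $s<t$
\[
\mathbb{E}\big[\,|Z_{n,t}-Z_{n,s}|^\beta\wedge1\mid\Lambda_n\big]\le \big(N m_n^\beta\big)\wedge1\le N^\delta m_n^{\beta\delta},
\]
where $N=\lfloor\Lambda_{n,t}\rfloor-\lfloor\Lambda_{n,s}\rfloor$ and the last step uses $x\wedge1\le x^\delta$ for $\delta\le1$. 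Taking expectations, bounding $N^\delta\le(\Lambda_{n,t}-\Lambda_{n,s})^\delta+1$, and invoking the stationary-increment moment estimate $\mathbb{E}(\Lambda_{n,t}-\Lambda_{n,s})^\delta=\mathbb{E}\Lambda_{n,t-s}^\delta\le C_n^\delta(t-s)^{\delta_1}$ yields
\[
\mathbb{E}\big[\,|Z_{n,t}-Z_{n,s}|^\beta\wedge1\,\big]\le \big(C_n m_n^\beta\big)^\delta (t-s)^{\delta_1}+m_n^{\beta\delta}.
\]
The first term is controlled uniformly in $n$ by \eqref{dens} and vanishes as $t-s\to0$, while the second is the contribution of the single ``boundary'' summand created by the floor: portmanteau applied to $\Lambda_{n,1}/n\to\bar U$ gives $\liminf(C_n/n)^\delta\ge\mathbb{E}\bar U^\delta>0$ in the nondegenerate case, so $C_n\to\infty$ and hence $m_n^\beta\le\mathrm{const}/C_n\to0$ by \eqref{dens}. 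Implementing this through Aldous's criterion — replacing $s,t$ by $\tau$ and $\tau+h$ for a stopping time $\tau$ and using the strong Markov property of the subordinator so that $\Lambda_{n,\tau+h}-\Lambda_{n,\tau}\wc\Lambda_{n,h}$ — gives $\limsup_n\sup_{h\le\eta}\mathbb{E}[|Z_{n,\tau+h}-Z_{n,\tau}|^\beta\wedge1]\le K^\delta\eta^{\delta_1}\to0$, and a finite-grid bound supplies compact containment. This establishes $J_1$-tightness, which with the finite-dimensional convergence proves $Z_n\Rightarrow Y_{U_\cdot}$ in $\mathbb{D}([0,T])$. The delicate points, all hidden in \eqref{dens}, are the exponent bookkeeping among $\beta,\delta,\delta_1$ and the negligibility of the floor-induced term; an alternative route via a continuous-mapping theorem for the composition $(x,\lambda)\mapsto x\circ\lambda$ is possible but would instead require showing that $(Y,U)$ is almost surely a continuity point, which is precisely where the independence of the two L\'evy limits is used.
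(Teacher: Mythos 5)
Your proposal is correct, and its skeleton --- convergence of finite-dimensional distributions plus tightness extracted from \eqref{dens} --- is exactly the skeleton of the paper's argument. The difference is one of self-containedness rather than strategy: the paper's proof consists almost entirely of citations, invoking the Gnedenko--Fahim transfer theorem for the one-point limit $\sum_{k=1}^{\lfloor\Lambda_{n,1}\rfloor}X_{n,k}\Rightarrow Z_1$ and then referring to the proof of Theorem~1 of Korolev--Zaks--Zeifman for the passage to finite-dimensional distributions and, via \eqref{dens}, to convergence in the Skorokhod space. You instead prove both steps from scratch. Your conditional characteristic-function computation ($n\log\phi_n\to\psi_Y$, Slutsky against $\lfloor\Lambda_{n,t}\rfloor/n\Rightarrow U_t$, factorization over disjoint increment blocks) is in effect a direct proof of the transfer theorem in the iid-row case, and it shows precisely where the independence of $\Lambda_n$ from the $X_{n,k}$ and the L\'evy structure of $\Lambda_n$ enter. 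Your tightness argument (subadditivity of $x\mapsto x^\beta$ for $\beta\le 1$, the $\delta$-moment hypothesis on the subordinator, Aldous's criterion) makes visible exactly how the exponents $\beta,\delta,\delta_1$ and condition \eqref{dens} cooperate; in particular, the observation that nondegeneracy of $\bar U$ forces $C_n\gtrsim n$, so that \eqref{dens} makes $m_n\to 0$ and kills the floor-induced boundary term $m_n^{\beta\delta}$, is a genuinely useful detail that is invisible in the paper's citation-style proof. Two small caveats if you were to write this up in full: the claim that $\phi_n(u)\to 1$ uniformly on compacts (infinitesimality of the array) deserves its standard symmetrization-and-centering argument rather than being asserted; and the degenerate case $\bar U=0$ a.s., which your $C_n\to\infty$ step excludes, needs a separate (easy) treatment, e.g., by noting that sums of $o(n)$ row elements vanish in the limit by the same exponent estimate $\lfloor\Lambda_{n,t}\rfloor\log\phi_n(u)=(\lfloor\Lambda_{n,t}\rfloor/n)\,n\log\phi_n(u)\to 0$.
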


The proof is a word-for-word repetition of the arguments leading to
\cite[Theorem 1]{KZZ}:
 the result of \cite{GnF}, together with the assumptions of the
  theorem, implies
$$
\lim_{n\to \infty} \sum_{k=1}^{\lfloor \Lambda_{n,1}\rfloor} X_{n,k}\wc Z_1,
$$
and therefore the convergence of finite-dimensional distributions for
 the corresponding processes; together
with condition \eqref{dens}, this  implies the convergence in the Skorokhod space. Because we deal exclusively with L\'{e}vy processes, it is possible to
avoid the heavy machinery from \cite{LimitTheoremsforStochasticProcesses}.

We now consider the wealth process \eqref{weath-mgen}
and apply Theorem \ref{th:Levy-gen0}
with
$$
X_{n,k}=
 \ln (1+fr_{n,k})-\mathbb{E}\ln (1+fr_{n,k}).
 $$

On the one hand,  convergence to infinitely
divisible distributions other than normal is a very diverse area,
with a variety of conditions and conclusions;
cf. \cite[Chapter XVII, Section 5]{FellerII} or a summary in
\cite[Section 16.2]{Klenke}. On the other hand, optimal strategy
\eqref{optf-rv} seems to persist.

For example, assume that
the returns $r_{n,k}$ are as in \eqref{return-nk}, and
let $\Lambda_{n,t}=S_{n^{\alpha}t}$, where $\alpha\in (0,1]$ and
$S=S_t$ is the L\'{e}vy process such that $S_1$ has the $\alpha$-stable
distribution with both scale and skewness parameters equal to $1$.
Recall that an $\alpha$-stable L\'{e}vy process $L^{\alpha}=L^{\alpha}_t$
satisfies the following equality in distribution (as processes):
\begin{equation}
\label{scale-L}
L^{\alpha}_{\gamma t}\wcL \gamma^{1/\alpha}L^{\alpha}_t, \ \gamma>0.
\end{equation}

Then
$$
\Lambda_{n,t}\wcL nS_{t}
$$
 and, in the notations of Theorem \ref{th:Levy-gen0},
$\bar{Y}$ is normal with mean zero and variance $\sigma^2$.
 Keeping in mind that
 $$
 \mathbb{E}\ln (1+fr_{n,k}) = \mathbb{E}\ln (1+fr_{n,1})=
 \Big(f\mu-\frac{f^2\sigma^2}{2}\Big)n^{-1}+o(n^{-1}),
 $$
 we repeat the arguments from \cite[Example 1]{KZZ} to conclude that
 $$
 \lim_{n\to \infty} \ln W^{n,f}_t \wcL \Big(f\mu-\frac{f^2\sigma^2}{2}\Big)S_t
 + Z_t,
 $$
 where $Z_1$ has symmetric $2\alpha$-stable distribution.
 By \eqref{scale-L},
 $$
 S_t\wc t^{1/\alpha}S_1,\ \lim_{t\to +\infty} t^{-1/\alpha}Z_t \wc
 \lim_{t\to +\infty} t^{-1/(2\alpha)}Z_1 \wc 0,
 $$
 and the ``natural''
 long term growth rate becomes
 $$
 \lim_{t\to \infty}t^{-1/\alpha}\Big(\lim_{n\to \infty} \ln W^{n,f}_t\Big) \wc
 \Big(f\mu-\frac{f^2\sigma^2}{2}\Big)S_1,
 $$
 which is random, but, for each realization of $S$, is still maximized by
 $f^*$ from \eqref{optf-rv}. Therefore, if the time with which we compound our wealth is random, then the growth rate is also random as we don't know when we will stop compounding, yet it is still maximized by a deterministic fraction. Note that, for the purpose of this
 computation, the (stochastic) dependence between the processes
 $S$ and $Z$ is not important.

\section{Conclusions And Further Directions}

The NS-NL condition $f^*\in [0,1]$ can fail in many situations.
Even in the simple Bernoulli model, if $p<1/2$, then the short
position $f^*=2p-1$ achieves positive  long-time wealth growth:
$$
g_r(f^*)=p\ln \frac{p}{1-p}+(2-p)\ln(2-2p)=\ln 2 +p\ln p+(1-p)\ln(1-p)>0.
$$
Note that $-p\ln p-(1-p)\ln(1-p)$ is the Shannon entropy
of the Bernoulli distribution, and
the largest value of the entropy is $\ln 2$, corresponding to $p=1/2$.
When the edge is too big (cf. \eqref{GenBern}),
then $f^*>1$, that is, leveraged gambling
 leads to  bigger  long-time wealth growth than any NS-NL strategy.
The economical and financial implications of $f^*\notin [0,1]$ are
beyond the scope of our investigation and must be studied in
a broader context of risk tolerance: even when $f^*\in (0,1)$, a certain
fraction of $f^*$ can be a smarter strategy, cf. \cite[Section 7.3]{Thorp06}.

A related observation, to be further studied in the future, is that high-frequency
betting can lead to a more aggressive strategy than the ``low
frequency'' counterpart. For example, comparing \eqref{return1} and
\eqref{return1-hf}, we see that $\mu=2p-1$ and $\sigma^2=4p(1-p) < 1$
when $p\not=1/2$.
As a result, by \eqref{optf-rv},
the optimal strategy for \eqref{return1-hf} with large $n$ is
$f^*\approx (2p-1)/(4p(1-p))> 2p-1$; recall that $f^*=2p-1$ is the optimal
strategy for the simple Bernoulli model \eqref{return1}. On the other
hand, numerical simulations suggest that, in the log-normal model
\eqref{discr-ret-gauss}, \eqref{GBM-returns}, high-frequency compounding
does not always lead to larger $f^*$.

Other problems warranting further investigation include
\begin{enumerate}
\item A dynamic strategy $f=f(t)$ with a predictable process $f$;
\item A portfolio of bets, with a vector of strategies
  $\mathbf{f}=(f_1,\ldots, f_N)$.
  \end{enumerate}

%\section{Acknowledgements}

%\section{Appendix}

\bibliographystyle{amsplain}

\begin{thebibliography}{n} %% n is number of items, or the largest label

%\bibitem{FineStructureAssetReturns} P.\,L. Carr, H. Geman, D.\,P. Madan, M. Yor,
%\emph{The fine structure of asset returns: An empirical investigation},
%Journal of Business, 75, pp. 305--332, 2002.

%\bibitem{FinancialModellingwithJumps} R. Cont, P. Tankov,
%\emph{Financial Modelling with Jump Processes},
%Chapman\&Hall/CRC Financial Modelling Series, 2004.

 \bibitem{FellerII} W. Feller, \emph{An Introduction to Probability Theory and its Applications, Vol. II}, Second edition, John Wiley \& Sons, Inc., 1971.

 \bibitem{GnF}  B. V. Gnedenko, G. Fahim,
  \emph{A certain transfer theorem}.  Dokl. Akad. Nauk SSSR, 187,  pp. 15--17,
  1969.

\bibitem{Gradshtein-Ryzhyk}
I. S.  Gradshteyn, I. M.  Ryzhik,
\emph{Table of Integrals, Series, and Products}, Eighth edition,  Elsevier/Academic Press, 2015.

\bibitem{LimitTheoremsforStochasticProcesses} J. Jacod, A.\,N. Shiryaev,
\emph{Limit Theorems for Stochastic Processes},  Second edition,
Springer, 2003.

\bibitem{Kelly56} J.\,L. Kelly,
\emph{A new interpretation of information rate},
Bell. System Tech. J., 1956.

\bibitem{Klenke} A. Klenke, \emph{Probability Theory: A Comprehensive Course}, Second edition, Universitext, Springer,  2014.

\bibitem{KZZ}  V. Yu. Korolev, L. M.  Zaks, A. I.  Zeifman,
 \emph{On convergence of random walks generated by compound Cox
 processes to L\'{e}vy processes}, Statist. Probab. Lett., 83(10),
 pp. 2432--2438, 2013.

 \bibitem{LSh-M}
 R. Sh.  Liptser, A. N.  Shiryayev,
 \emph {Theory of Martingales},  Kluwer, 1989.

%\bibitem{FractalFinance} B. Mandelbrot,
%\emph{Fractals and Scaling in Finance: Discontinuity, Concentration, Risk},
%Springer, 1997.

\bibitem{Protter} P. Protter,
\emph{Stochastic Integration and Differential Equations},
Springer, 1990.

\bibitem{Sato} K. Sato,
\emph{L\'evy Processes and Infinitely Divisible Distributions},
Cambridge University, 1999.

\bibitem{Thorp08} E.\,O. Thorp,
\emph{Understanding the Kelly criterion},
in: L.C. MacLean, W. T. Ziemba, and  E. O. Thorp (editors),
 \emph{The Kelly Capital Growth Investment Criterion}, World Scientific, 2010.

\bibitem{Thorp06} E.\,O. Thorp,
\emph{The Kelly criterion in blackjack, sports betting, and the stock market},
in: L.C. MacLean, W. T. Ziemba, and  E. O. Thorp (editors),
 \emph{The Kelly Capital Growth Investment Criterion}, World Scientific, 2010.

 \bibitem{Thorp03} E.\,O. Thorp,
\emph{A Perspective on Quantitative Finance: Models for Beating the Market},
in: The Best of Wilmott, 2003.

 \bibitem{Zolotarev-AE}
V.\, M. Zolotarev,
\emph{Asymptotic expansions with probability {$1$} of sums of
	independent random variables},
Teor. Veroyatnost. i Primenen., 36(4), pp. 770--772, 1991.


\end{thebibliography}

\end{document}